\documentclass[11pt,a4paper]{amsart}
\usepackage[utf8]{inputenc}
\usepackage[T1]{fontenc}
\usepackage{textcomp}
\usepackage{lmodern}
\usepackage{amssymb}
\usepackage{amsmath}
\usepackage{bbm}
\usepackage[normalem]{ulem}
\usepackage{hyperref}
\usepackage{comment}
\excludecomment{hide}
\includecomment{keep}

\usepackage{hyperref}
\usepackage{xcolor}
\hypersetup{%
    colorlinks=false,
    pdfborderstyle={/S/U/W 1}
}
\usepackage{tikz}

\definecolor{egraf}{rgb}{0.4,0.6,0.4}

\usepackage{xspace}
\usepackage{enumitem}
\usepackage{bbm}
\setenumerate[1]{label=(\roman{*}), ref=(\roman{*})}
\setenumerate[2]{label=(\alph{*}), ref=(\alph{*})}

\theoremstyle{plain}
\newtheorem{thm}{Theorem}[section]
\newtheorem*{thm*}{Theorem}
\newtheorem*{lem*}{Lemma}
\newtheorem{cor}[thm]{Corollary}
\newtheorem{prop}[thm]{Proposition}
\newtheorem{lem}[thm]{Lemma}

\newtheorem{quest}[thm]{Question}
\newtheorem{claim}[thm]{Claim}
\theoremstyle{definition}
\newtheorem{defn}[thm]{Definition}
\newtheorem{exmp}[thm]{Example}

\theoremstyle{remark}
\newtheorem{rem}[thm]{Remark}

\newcommand*{\claimproofname}{Proof of claim}
\newenvironment{claimproof}[1][\claimproofname]{\begin{proof}[#1]}{\end{proof}}

\addtolength{\textheight}{8pt}


\newcommand{\eps}{\varepsilon}
\newcommand{\vertiii}[1]{{\left\vert\kern-0.25ex\left\vert\kern-0.25ex\left\vert #1 
    \right\vert\kern-0.25ex\right\vert\kern-0.25ex\right\vert}}

\DeclareMathOperator{\linspan}{span}
\DeclareMathOperator{\clinspan}{\overline{span}}

\DeclareMathOperator{\sgn}{sign}

\DeclareMathOperator{\supp}{supp}

\newcommand\restr[2]{{
    \left.\kern-\nulldelimiterspace 
      #1 
      \littletaller 
    \right|_{#2} 
  }}
\newcommand{\littletaller}{\mathchoice{\vphantom{\big|}}{}{}{}}

\newcommand{\HB}{\text{H{\kern -0.35em}B}}

\begin{document}

\title{Strictly convex norms and the local diameter two property}

\author[T.~A.~Abrahamsen]{Trond A.~Abrahamsen}
\address[T.~A.~Abrahamsen]{Department of Mathematics, University of
  Agder, Postboks 422, 4604 Kristiansand, Norway.}
\email{trond.a.abrahamsen@uia.no}
\urladdr{http://home.uia.no/trondaa/index.php3}

\author[P.~H\'ajek]{Petr H\'ajek}
\address[P.~H\'ajek]{Department of Mathematics\\Faculty of Electrical
Engineering\\Czech Technical University in Prague\\Technick\'a 2, 166 27
Praha 6\\ Czech Republic\\ \href{https://orcid.org/0000-0002-1714-3142}{ORCiD: 0000-0002-1714-3142}}
\email{hajekpe8@fel.cvut.cz}

\author[V.~Lima]{Vegard Lima}
\address[V.~Lima]{Department of Mathematics, University of
  Agder, Postboks 422, 4604 Kristiansand, Norway.}
\email{Vegard.Lima@uia.no}

\author[S.~Troyanski]{Stanimir Troyanski}
\address[S.~Troyanski]{Institute of Mathematics and Informatics,
  Bulgarian Academy of Science, bl.8, acad. G.~Bonchev str.~1113
  Sofia, Bulgaria}
\email{stroya@um.es}

\begin{abstract}
  We introduce and study a strict monotonicity property of the norm in
  solid Banach lattices of real functions that prevents such spaces
  from having the local diameter two property.
  Then we show that any strictly convex 1-symmetric norm on
  $c_0(\Gamma)$ possesses this property.
  In the opposite direction, we show that any Banach space which is
  strictly convex renormable and contains a complemented copy of
  $c_0(\mathbb N),$ admits an equivalent strictly convex norm for
  which the space has the local diameter two property.  In particular,
  this enables us to construct a strictly convex norm on
  $c_0(\Gamma),$ where $\Gamma$ is uncountable, for which the space
  has a 1-unconditional basis and the local diameter two property.
\end{abstract}

\maketitle

\section{Introduction}
\label{sec:intro}
A Banach space (or a norm $\|\cdot\|$ on a Banach space) $X$ is said
to be \emph{strictly convex} if for all $x, y \in X$ with
$\frac{1}{2}\|x + y\| = \|x\| = \|y\|,$ we have $x = y.$ Another way of
stating this is that the unit sphere $S_X$ of $X$ does not contain any
non-trivial line segments.

A lot of effort has gone into understanding when a Banach space admits
an equivalent strictly convex norm.  It is known for example that
every separable Banach space admits an equivalent strictly convex norm
\cite[Theorem~9]{MR1501880}. For non-separable spaces things get much
more murky.  Day showed that when $\Gamma$ is an uncountable set,
$\ell_\infty(\Gamma)$ cannot be renormed to have a strictly convex
norm, while $c_0(\Gamma)$ can \cite{MR67351}. Uncountability of
$\Gamma$ is important here as $\ell_\infty(\mathbb N)$ is strictly
convex renormable (see \cite[Remark~151 and
Proposition~152]{MR4501243}).  Haydon \cite{MR1041141} showed that
there exist uncountable scattered compacts $K$ for which $C(K)$ fails
to be strictly convex renormable. At the present moment there does not
exist an effective (checkable) criterion for when a (non-separable)
Banach space admits an equivalent strictly convex norm or when it does
not.  For more on this topic the introduction in \cite{MR2876968} is a
good place to start.

Recall that a \emph{slice} of the unit ball $B_X$ of a Banach space $X$
is a set of the form
\begin{equation*}
  S(x^*, \eps) := \{x \in B_X: x^*(x) > 1 - \eps\},
\end{equation*}
where $x^* \in S_{X^*}$ and $\eps > 0.$
A Banach space $X$ is said to have the
\emph{local diameter two property (LD2P)}
provided every slice of $B_X$ has diameter two.
While the usual focus of renorming theory is to find an equivalent
norm with the nicest properties possible our focus is different.
We will study to what degree the LD2P and strict convexity can coexist.
While at first sight the LD2P seems to be incompatible with strict
convexity, there are examples showing that these properties indeed can
coexist.
In fact, even stronger properties are compatible with strict convexity.
A Banach space is \emph{almost square}
if for every finite subset $x_1,\ldots,x_n \in S_X$
and every $\varepsilon > 0$ there exists $y \in S_X$
such that $\|x_i + y\| \le 1 + \varepsilon$ for all $i = 1, \ldots, n$.
Any almost square Banach space has the LD2P
(see e.g \cite[Theorem~1.3 and Proposition~2.5]{MR3415738}).
Recall that a Banach space $X$ is \emph{M-embedded} if
$X^{***} = X^{*} \oplus_1 X^\perp$, that is
$X$ is an M-ideal in its bidual.
Note that non-reflexive M-embedded spaces are almost square
\cite[Corollary~4.3]{MR3415738}.
The quotient $C(\mathbb{T})/A$, where $C(\mathbb{T})$ is the space of
continuous functions on the complex unit circle $\mathbb{T}$ and where
$A$ is the disc algebra, is an example of a strictly convex
Banach space with the LD2P.
In fact, $C(\mathbb{T})/A$ is M-embedded and even has a smooth dual
\cite[Remark~IV.1.17]{MR1238713}.
Another example can be constructed as follows:
Let $\varphi$ be the function on $c_0(\mathbb N)$ defined by
$\varphi(x) = \sum_{n=1}^\infty x^{2n}_n$ where $x = (x_n).$
The \emph{Nakano norm} on $c_0(\mathbb N)$ is defined by
\begin{equation*}
  \|x\| = \inf\{\lambda > 0 : \varphi(x/\lambda) \le 1\}
\end{equation*}
for every $x \in c_0(\mathbb N)$.  The space $c_0(\mathbb{N})$ with
the Nakano norm is strictly convex and is almost square
\cite{MR3499106}.  A third and recent example is the strictly convex
renorming of $L_1[0,1]$ constructed in \cite{MR4696078}.  This example
is not almost square, but has the LD2P.

While the LD2P and strict convexity can coexist in a Banach space $X$,
strict convexity in $X^{**}$ is asking too much, at least if we ask
$X$ to have the property that all finite convex combinations of slices
of its unit ball have diameter two. (Whether the same conclusion holds
only assuming the LD2P, is an open question). Indeed, in this case not
only does $X^{**}$ fail to be strictly convex, it also fails to be
smooth \cite[Corollary~2.6]{MR3574595}. Another result in this
direction is that if $X$ has a bimonotone basis and the property that
all non-empty relatively weakly open subsets of its unit ball have
diameter two, then $X^{**}$ is not strictly convex
\cite[Proposition~2.10]{MR3574595}.
These results point to the fact that
constructing strictly convex Banach spaces with the LD2P is not
completely straightforward.

Note that all the examples above with the LD2P and strictly convex
norms, are separable.  If $\Gamma$ is an uncountable set we know that
Day's norm on $c_0(\Gamma)$ is strictly convex (and even locally
uniformly rotund), but it is not immediately obvious how to give
$c_0(\Gamma)$ a strictly convex norm with the LD2P.  If the norm of a
Banach space $X$ is locally uniformly rotund then every $x \in S_X$ is
strongly exposed, so the LD2P and local uniformly rotundity cannot
coexist. In the present paper, we construct the first,
to the best of our knowledge,
example of a strictly convex renorming of
$c_0(\Gamma)$ with the LD2P.

The rest of the paper is organized as follows: After some preliminary
results in Section~\ref{sec:prelim}, we introduce in
Section~\ref{sec:a-uniform-strict-monotonicity-property}
a strict monotonicity property of the norm in solid Banach lattices of
real functions and show that this property cannot coexist with the
LD2P.

In Section~\ref{sec:1uc-uniformly-strictly-monotone} we show that
under some assumptions on the norm, solid Banach lattices of real
functions possess the strict monotonicity property just mentioned.  In
particular, we show that for any set $\Gamma,$ any strictly convex
norm on $c_0(\Gamma)$ with a 1-symmetric basis, fails the LD2P.

Finally, in Section~\ref{sec:rotund-norm-c_0gamma} we prove that every
strictly convex renormable Banach space which contains a complemented copy
of $c_0(\mathbb N)$ has an equivalent norm which is both strictly convex and
almost square. In particular, this enables us to prove that for any infinite
set $\Gamma$, $c_0(\Gamma)$ has a strictly convex renorming with a
1-unconditional basis that is almost square.

\section{Preliminaries}
\label{sec:prelim}
Let us start by recalling some definitions and results that we will
need later in the paper.

A \emph{solid Banach lattice of real functions} on some set $\Gamma$ is a
Banach space $(X,\|\cdot\|)$ consisting of real-valued bounded functions
$x: \Gamma \to \mathbb R$ with the following properties
\begin{itemize}
\item
  if $x \in X$ and $y: \Gamma \to \mathbb R$ with $|y(\gamma)| \le |x(\gamma)|$
  for all $\gamma \in \Gamma$, then $y \in X$ and
  $\|y\| \le \|x\|$;
\item for every $\alpha \in \Gamma,$ the function $e_\alpha := \delta_\alpha$,
  where $\delta_\alpha(\gamma) = 1$ if $\gamma = \alpha,$ and
  $\delta_\alpha(\gamma) = 0$ if $\gamma \neq \alpha$, belongs to $X$.
\end{itemize}
We will assume that $\|e_\gamma\| = 1$ for every $\gamma \in \Gamma.$
Hence $\ell_1(\Gamma) \subseteq X \subseteq \ell_\infty(\Gamma)$
and $\|\cdot\|_\infty \le \|\cdot\| \le \|\cdot\|_1$.
When $\Gamma$ is the set $\mathbb N$ of natural numbers, we obtain the
well known class of K\"othe sequence spaces.

For $\gamma \in
\Gamma,$ we let $e_\gamma^* \in
X^*$ denote the biorthogonal functional to
$e_\gamma.$ Note that for $x \in X$ we have $e_\gamma^*(x) =
x(\gamma).$

Clearly $(e_\gamma)_{\gamma \in \Gamma}$ is a 1-unconditional
\emph{basic set} in $X,$ i.e. a 1-unconditional basis for the norm
closure of the linear span of $\{e_\gamma: \gamma \in \Gamma\}$ in
$X.$ Hence if $c_{00}(\Gamma)$, the space of finitely supported
real-valued functions on $\Gamma,$ is dense in $X$, the unit vectors
$(e_\gamma)_{\gamma \in \Gamma}$ form a 1-unconditional basis for
$X$. Recall that if $\Gamma$ is a non-empty set, then
$(e_\gamma)_{\gamma \in \Gamma}$ is called \emph{an unconditional
  basis} for a Banach space $X$ if for every $x \in X$ there is a
unique family of real numbers $(a_\gamma)_{\gamma \in \Gamma}$ such
that $x = \sum_{\gamma \in \Gamma} a_\gamma e_\gamma$ in the sense
that for every $\eps > 0$ there is a finite set $F \subset \Gamma$
such that $\|x - \sum_{\gamma \in G} a_\gamma e_\gamma\| < \eps$ for
every $G \supset F.$ If moreover, for any finite set $F$ in $\Gamma$
and any set of real numbers $(a_\gamma)_{\gamma \in F}$ and
$(b_\gamma)_{\gamma \in F}$ with $|b_\gamma| \le |\alpha_\gamma|$ for
$\gamma \in F,$ we have
\[
  \left\|\sum_{\gamma \in F} b_\gamma e_\gamma\right\| \le
  \left\|\sum_{\gamma \in F} a_\gamma e_\gamma\right\|,
\]
then we say that $(e_\gamma)_{\gamma \in \Gamma}$ is
\emph{1-unconditional}.

If $X$ is a solid Banach lattice of real functions on $\Gamma,$ we say
that $X$ is \emph{1-symmetric} if for every $x = x(\gamma) \in X$ and
every permutation $\pi: \Gamma \to \Gamma$ we have
$x_\pi = x(\pi(\gamma)) \in X$ and $\|x_\pi\| = \|x\|$. We say that
the norm of the lattice $X$ is \emph{strictly monotone} if
$\|y\| < \|x\|$ whenever $|y| < |x|$, that is if
$|y(\gamma)| \le |x(\gamma)|$ for all $\gamma \in \Gamma$ and
$|y(\beta)| < |x(\beta)|$ for some $\beta \in \Gamma$.

Let $(e_\gamma)_{\gamma \in \Gamma}$ be a 1-unconditional basis for a
Banach space $X.$ For each $\gamma \in \Gamma$ let $e_\gamma^* \in X^*$
denote the biorthogonal functional of $e_\gamma.$ We say that
$(e_\gamma, e_\gamma^*)_{\gamma \in \Gamma}$ is \emph{shrinking} if
$\clinspan\{e_\gamma^*: \gamma \in \Gamma\} = X^*.$

The following results will be used several times in the paper. The
first one is James' classical representation of the bidual of a Banach
space with a shrinking basis in case the shrinking basis is
uncountable and unconditional. We include a proof for easy reference.

\begin{prop}
  \label{prop:shrinking-1unc-sup}
  Let $X$ be a Banach space with a shrinking 1-unconditional basis
  $(e_\gamma, e_\gamma^*)_{\gamma \in \Gamma}$, and $\mathcal F$ the set
  $\{F \subset \Gamma: |F| < \infty\}$ ordered by inclusion. Then for every
  $x^{**} \in X^{**}$ we have
  \[
    \|x^{**}\| = \sup_{F \in \mathcal F} \left\|\sum_{\gamma \in
        F}x^{**}(e_\gamma^*)e_\gamma\right\| = \lim_{\mathcal F}
    \left\| \sum_{F \in \mathcal F}x^{**}(e_\gamma^*)e_\gamma\right\|.
  \]
\end{prop}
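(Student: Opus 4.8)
The plan is to realize the finite restriction operators as the natural norm-one projections associated with the 1-unconditional basis and then to transport them to the bidual. For a finite set $F \in \mathcal F$, let $P_F\colon X \to X$ be the projection $P_F x = \sum_{\gamma \in F} e_\gamma^*(x) e_\gamma$. Because the basis is 1-unconditional, $\|P_F\| \le 1$, and for $F \subseteq G$ one checks directly that $P_F P_G = P_F$, so $\|P_F x\| \le \|P_G x\|$ for every $x$; in particular the net $(\|P_F x\|)_{F \in \mathcal F}$ is monotone increasing along $\mathcal F$. First I would compute the iterated adjoints: a short calculation using $e_\gamma^*(e_\beta) = \delta_{\gamma\beta}$ gives $P_F^* x^* = \sum_{\gamma \in F} x^*(e_\gamma) e_\gamma^*$ and then $P_F^{**} x^{**} = \sum_{\gamma \in F} x^{**}(e_\gamma^*) e_\gamma$, where the right-hand side is identified with its canonical image in $X^{**}$. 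Since the canonical embedding is isometric and $\|P_F^{**}\| = \|P_F\| \le 1$, this immediately yields $\bigl\|\sum_{\gamma \in F} x^{**}(e_\gamma^*) e_\gamma\bigr\| \le \|x^{**}\|$ for every $F$, and hence $\sup_{F \in \mathcal F} \bigl\|\sum_{\gamma \in F} x^{**}(e_\gamma^*) e_\gamma\bigr\| \le \|x^{**}\|$.

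For the reverse inequality I would invoke the shrinking hypothesis. The key point is that $P_F^* x^* \to x^*$ in norm along $\mathcal F$ for every $x^* \in X^*$: indeed $P_F^*$ fixes $e_\gamma^*$ for $\gamma \in F$ and annihilates it otherwise, so $P_F^*$ acts as the identity on $\linspan\{e_\gamma^* : \gamma \in F\}$. Given $\eps > 0$, density of $\clinspan\{e_\gamma^* : \gamma \in \Gamma\} = X^*$ lets me approximate $x^*$ within $\eps$ by a finitely supported $y^*$, and for every $F \supseteq \supp y^*$ a three-term estimate together with $\|P_F^*\| \le 1$ gives $\|P_F^* x^* - x^*\| \le 2\eps$. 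Consequently, for each $x^{**}$ and each $x^*$ we have $(P_F^{**} x^{**})(x^*) = x^{**}(P_F^* x^*) \to x^{**}(x^*)$, that is, $P_F^{**} x^{**} \to x^{**}$ in the weak-$*$ topology of $X^{**}$.

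Weak-$*$ lower semicontinuity of the bidual norm then gives $\|x^{**}\| \le \liminf_{F} \|P_F^{**} x^{**}\| = \liminf_{F} \bigl\|\sum_{\gamma \in F} x^{**}(e_\gamma^*) e_\gamma\bigr\|$, which, combined with the first inequality, forces the norm to equal the supremum. Finally, the monotonicity of the net $\bigl(\bigl\|\sum_{\gamma \in F} x^{**}(e_\gamma^*) e_\gamma\bigr\|\bigr)_{F \in \mathcal F}$ noted above shows that the limit over $\mathcal F$ exists and coincides with the supremum, establishing the last equality as well.

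I would not expect a serious obstacle, as this is the classical James representation. The only points requiring care are the bookkeeping that identifies $P_F^{**} x^{**}$ with $\sum_{\gamma \in F} x^{**}(e_\gamma^*) e_\gamma$ under the canonical embedding, and locating precisely where the shrinking hypothesis enters, namely in the norm convergence $P_F^* x^* \to x^*$ that underlies the weak-$*$ convergence $P_F^{**} x^{**} \to x^{**}$; without shrinking this convergence, and hence the reverse inequality, would fail.
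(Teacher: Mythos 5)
Your proof is correct and follows essentially the same route as the paper's: the norm-one projections $P_F$ coming from 1-unconditionality, the identification $P_F^{**}x^{**} = \sum_{\gamma \in F} x^{**}(e_\gamma^*)e_\gamma$, the weak$^*$ convergence $P_F^{**}x^{**} \to x^{**}$ obtained from the shrinking hypothesis, and weak$^*$ lower semicontinuity of the bidual norm. The only cosmetic differences are that you compute $P_F^*x^*$ directly for arbitrary $x^*$ (rather than checking against the biorthogonals and extending by density, as the paper does) and that you obtain the final equality of limit and supremum from monotonicity of the net $\left(\left\|P_F^{**}x^{**}\right\|\right)_{F \in \mathcal F}$ rather than from the $\liminf$ estimate; both are sound.
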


\begin{proof}
  Let $F \subset \Gamma$ be finite and $P_F:X \to X$ the projection
  onto $\linspan\{e_\gamma: \gamma \in F\}$ defined by
  \[
    P_F(x)
    = \sum_{\gamma \in F} e_\gamma^*(x) e_\gamma.
  \]
  Since the basis is shrinking, we have 
  \begin{align}
    \label{eq:15}
     P_F^{**}(x^{**})
    = \sum_{\gamma \in F} x^{**}(e_\gamma^*) e_\gamma,
  \end{align}
  for every $x^{**} \in X^{**}.$
  Indeed if $\gamma_0 \in \Gamma$ and $x^* = e_{\gamma_0}^*,$ then
  \begin{align}
    \label{eq:16}
    \langle P_F^{**}(x^{**}), x^*\rangle
    = \langle x^{**}, P_F^*(e_{\gamma_0}^*)\rangle
    =
      \begin{cases}
        x^{**}(e_{\gamma_0}^*),& \gamma_0 \in F\\
        0, & \gamma_0 \notin F,
      \end{cases}
  \end{align}
  so
  \[
    \langle P_F^{**}(x^{**}), x^*\rangle
    = \left\langle \sum_{\gamma \in F}
    x^{**}(e_\gamma^*) e_\gamma, x^*\right\rangle.
  \]
  From this in tandem with linearity and continuity of the functionals
  involved and the fact that
  $\clinspan\{e_\gamma^*: \gamma \in \Gamma\} = X^*,$ the equality
  (\ref{eq:15}) follows.  Moreover, since the basis is
  1-unconditional,
  \[
    \|x^{**}\|
    = \sup_{F \in \mathcal F} \|P_F^{**}x^{**}\|.
  \]
  Indeed, because of
  (\ref{eq:16}) and since
  $\clinspan \{e_\gamma^*: \gamma \in \Gamma\} = X^{*},$ the net
  $(P_F^{**} x^{**})_{F \in \mathcal F},$
  converges weak$^*$ to $x^{**}.$ Thus by weak$^*$ lower
  semi-continuity of the norm we have 
  $\|x^{**}\| \le \liminf_{\mathcal F} \|P_F^{**}x^{**}\| \le
  \sup_{F \in \mathcal F}\|P_F^{**}x^{**}\|.$ Also, since
  $(e_\gamma, e_\gamma^*)_{\gamma \in \Gamma}$ is 1-unconditional (in
  particular monotone) we have $\|P_F\| = 1,$ so we can conclude
  \[
    \|x^{**}\|
    = \sup_{F \in \mathcal F} \|P_F^{**}x^{**}\|
    = \lim_{\mathcal F}\|P_F^{**}x^{**}\|.
    \qedhere
  \]
\end{proof}

The next result is well known.
For a proof see that of Theorem~2.6 (ii) implies (iii)
in \cite{MR2352727}.

\begin{prop}
  \label{prop:sc=>sm}
  Let $X$ be a solid Banach lattice of real functions.
  If the norm of $X$ is strictly convex,
  then $X$ is strictly monotone.
\end{prop}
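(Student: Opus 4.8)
The plan is to argue by contradiction, reducing first to nonnegative functions and then exhibiting a nontrivial segment on the unit sphere, which strict convexity forbids. By solidity we automatically have $\|y\| \le \|x\|$ whenever $|y| \le |x|$, so the only thing to rule out is equality, and I will suppose $\|y\| = \|x\|$ and derive a contradiction.

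First I would reduce to the case $0 \le y \le x$. Since $X$ is solid, $\||x|\| = \|x\|$ for every $x \in X$, and replacing $x, y$ by their pointwise absolute values $|x|, |y|$ preserves every hypothesis: it keeps $0 \le |y| \le |x|$ pointwise, keeps the strict inequality $|y(\beta)| < |x(\beta)|$ at the distinguished coordinate $\beta$, and leaves both norms unchanged. So it suffices to treat $x, y \in X$ with $0 \le y \le x$ and $y(\beta) < x(\beta)$. Note that $x \neq 0$ (otherwise $0 \le y(\beta) < x(\beta) = 0$ is impossible), hence $\|x\| = \|y\| > 0$, and after scaling I may assume $\|x\| = \|y\| = 1$.

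The key step is to consider the reflection $v := 2y - x \in X$. Using $0 \le y \le x$ pointwise one checks $-x \le v \le x$, so $|v| \le x = |x|$ and solidity gives $\|v\| \le 1$. Since $y = \tfrac12(x + v)$, the triangle inequality yields $1 = \|y\| \le \tfrac12(\|x\| + \|v\|) \le 1$, forcing equality throughout; in particular $\|v\| = 1$ and $\tfrac12\|x + v\| = \|x\| = \|v\| = 1$. Now strict convexity, applied to the pair $x$ and $v$, forces $x = v = 2y - x$, that is $x = y$, contradicting $y(\beta) < x(\beta)$.

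I do not expect a genuine obstacle once the auxiliary element $v = 2y - x$ is written down, since everything reduces to a short computation. The only points needing a little care are the passage to absolute values (where solidity is invoked to keep the norms fixed) and the verification that the reflection remains inside the order interval $[-x, x]$, which is exactly what bounds $\|v\|$ by $1$ and lets strict convexity bite. Conceptually, strict monotonicity can fail only if the segment running from $x$ through $y$ to its reflection $2y - x$ lies on the unit sphere, and strict convexity rules out precisely such a segment.
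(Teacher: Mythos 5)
Your proof is correct. The paper itself does not give a proof of this proposition; it only refers the reader to the proof of Theorem~2.6 (ii)$\Rightarrow$(iii) in \cite{MR2352727}, so your argument is a legitimate self-contained replacement. The reduction to $0 \le y \le x$ via absolute values is sound (solidity gives $\||x|\| = \|x\|$ since $|x|$ and $x$ dominate each other pointwise), the containment $-x \le 2y - x \le x$ is exactly what is needed to invoke solidity on the reflection $v = 2y - x$, and the equality case of the triangle inequality puts the triple $x$, $y = \tfrac12(x+v)$, $v$ on the unit sphere so that strict convexity forces $x = v$, i.e.\ $x = y$. For comparison, the standard route (the one behind the cited reference) uses the midpoint rather than the reflection: from $0 \le y \le \tfrac12(x+y) \le x$ pointwise, solidity alone gives $\|y\| \le \tfrac12\|x+y\| \le \|x\|$, so $\|x\| = \|y\|$ already forces $\tfrac12\|x+y\| = \|x\| = \|y\|$ and strict convexity applies directly to the pair $x, y$. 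That variant avoids both the auxiliary element $v$ and the triangle-inequality equality discussion, but your version is equally rigorous and conceptually the same: strict monotonicity can only fail by producing a nontrivial segment on the unit sphere.
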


The converse does not hold in general, just consider $X = \ell_1$.
But if $(X,\|\cdot\|)$ is a solid Banach lattice of real functions on some
set $\Gamma$ such that $\|\cdot\|_\infty \le \|\cdot\|,$ we have
from \cite[Theorem~2.6]{MR2352727}, that the following statements are
equivalent:
\begin{enumerate}
\item $X$ admits a pointwise lower semi-continuous strictly convex norm;
\item $X$ admits a lattice pointwise lower semi-continuous strictly
  convex norm;
\item $X$ admits a pointwise lower semi-continuous strictly monotone
  norm (in \cite{MR2352727} the term strictly lattice norm is used).
\end{enumerate}

\section{A uniform strict monotonicity property of the
  norm}
\label{sec:a-uniform-strict-monotonicity-property}

Throughout this section $X$ will be a solid Banach lattice of real
functions on a set $\Gamma$ with a (normalized) 1-unconditional basic
set $(e_\gamma)_{\gamma \in \Gamma}.$
In this section we will introduce a property that will
prevent $X$ from having the LD2P.
The Nakano norm on $c_0$ shows that a strictly monotone norm is
not enough to prevent the LD2P.
On the other hand, the space $c_0 \oplus_1 \mathbb{R}$
has a 1-unconditional basis, fails the LD2P, and the norm
is not strictly monotone.
The last example can serve as a motivation for our property.

\begin{defn}
  For each $\alpha \in \Gamma$ and $\varepsilon > 0$ define
  \begin{equation*}
    E_\alpha(\varepsilon)
    := \sup\left\{\|x - e_\alpha^*(x) e_\alpha \|:
      x \in S_X, e_\alpha^*(x) > 1 - \varepsilon \right\}.
  \end{equation*}
  If there exist $\alpha \in \Gamma$ and
  $\varepsilon > 0$ such that $E_\alpha(\varepsilon) < 1$, then we say
  that $X$ is \emph{uniformly strictly monotone at
    $\alpha \in \Gamma$}.
\end{defn}
We note that $E_\alpha$ is a non-decreasing function of $\eps$ since
we are taking supremum over a smaller set.

\begin{prop}
  \label{prop:unif-sm-not-ld2p-v2}
  If there exists $\alpha \in \Gamma$ such that
  $X$ is uniformly strictly monotone at $\alpha$,
  then $X$ fails to have the LD2P.
\end{prop}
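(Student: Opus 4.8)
The plan is to exhibit a single slice of $B_X$ whose diameter is strictly less than $2$, which immediately contradicts the LD2P. The natural candidate is the slice cut out by the biorthogonal functional $e_\alpha^*$ at a point $\alpha$ witnessing uniform strict monotonicity. First I would verify that $e_\alpha^*\in S_{X^*}$ so that this is a legitimate slice: since $\|\cdot\|_\infty\le\|\cdot\|$ and $\|e_\alpha\|=1$, we have $|e_\alpha^*(x)|=|x(\alpha)|\le\|x\|$ while $e_\alpha^*(e_\alpha)=1$, so $\|e_\alpha^*\|=1$. Consequently $S(e_\alpha^*,\delta)$ is a genuine slice for every $\delta>0$, and it is nonempty since it contains $e_\alpha$.

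Let $c:=E_\alpha(\varepsilon)<1$ be given by the hypothesis. Because $E_\alpha$ is non-decreasing, $E_\alpha(\delta)\le c$ for every $\delta\le\varepsilon$, so I am free to shrink the slice: I would fix $\delta$ with $0<\delta\le\varepsilon$, $\delta<2(1-c)$, and $\delta<1$, and work with $S(e_\alpha^*,\delta)$. The crucial estimate is that for every $x$ in this slice the ``tail'' $u:=x-e_\alpha^*(x)e_\alpha$ satisfies $\|u\|\le c$. This is immediate from the definition of $E_\alpha$ when $\|x\|=1$, but the slice also contains points of norm strictly less than $1$, and handling those is the one step that needs care. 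Here I would rescale: for $x$ in the slice set $\tilde x:=x/\|x\|\in S_X$ (note $\|x\|\ge x(\alpha)>1-\delta>0$, so this is well defined); then $\tilde x(\alpha)=x(\alpha)/\|x\|\ge x(\alpha)>1-\delta$, so $\tilde x$ is admissible in the supremum defining $E_\alpha(\delta)$, and therefore
\[
  \|u\|=\|x\|\,\bigl\|\tilde x-\tilde x(\alpha)e_\alpha\bigr\|\le\|x\|\,E_\alpha(\delta)\le c.
\]

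With this in hand the diameter bound is a single application of the triangle inequality. For $x,y\in S(e_\alpha^*,\delta)$ write $x-y=(x(\alpha)-y(\alpha))e_\alpha+u-v$, where $u=x-x(\alpha)e_\alpha$ and $v=y-y(\alpha)e_\alpha$. Since $x(\alpha),y(\alpha)\in(1-\delta,1]$ we have $|x(\alpha)-y(\alpha)|<\delta$, and by the previous step $\|u\|,\|v\|\le c$, whence
\[
  \|x-y\|\le|x(\alpha)-y(\alpha)|\,\|e_\alpha\|+\|u\|+\|v\|<\delta+2c<2 .
\]
Thus $\diam S(e_\alpha^*,\delta)\le\delta+2c<2$, so this slice has diameter strictly less than $2$ and $X$ fails the LD2P.

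I expect the only genuinely delicate point to be the rescaling argument that transfers the bound $\|u\|\le c$ from the unit sphere, where $E_\alpha$ is defined, to the points of the slice lying strictly inside $B_X$; everything else is bookkeeping. The other thing to confirm is that the choice of $\delta$ can meet both requirements at once, namely $\delta\le\varepsilon$ (so that $E_\alpha(\delta)\le c$) and $\delta<2(1-c)$ (so that the final estimate drops below $2$), which is possible exactly because $c<1$. Notably, beyond $\|e_\alpha\|=\|e_\alpha^*\|=1$ the argument does not use the lattice structure, so it applies verbatim to any normalized basic set for which such an $E_\alpha(\varepsilon)<1$ holds.
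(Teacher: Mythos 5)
Your proof is correct, and it takes a genuinely different route from the paper's. The paper keeps the original slice $S(e_\alpha^*,\varepsilon)$ (after normalizing $\varepsilon<2/3$) and proves a quantitative bound $\diam S(e_\alpha^*,\varepsilon)\le 2(1-C)$ with $C=(1-3\varepsilon/2)(1-E_\alpha(\varepsilon))$; since the na\"ive triangle inequality only gives $\varepsilon+2E_\alpha(\varepsilon)$, which can exceed $2$, the paper needs two extra ingredients: a convexity trick, writing $x-(e_\alpha^*(x)-\varepsilon/2)e_\alpha$ as a convex combination of $x$ and its tail $x-e_\alpha^*(x)e_\alpha$ to show each such vector has norm at most $1-C$, and $1$-unconditionality of the basic set, used to replace the $\alpha$-coefficient of $y-z$ by $\varepsilon$ and split it as $\varepsilon/2+\varepsilon/2$ between the two tails. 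You instead exploit the monotonicity of $E_\alpha$ (which the paper records right after the definition) to shrink the slice parameter to $\delta\le\varepsilon$ with $\delta<2(1-c)$, after which the plain triangle inequality $\|x-y\|\le|x(\alpha)-y(\alpha)|+\|u\|+\|v\|\le\delta+2c<2$ closes the argument. This is more elementary: it dispenses with the convexity step entirely and, as you note, uses nothing beyond $\|e_\alpha\|=\|e_\alpha^*\|=1$, so it applies to any normalized biorthogonal pair, whereas the paper's final estimate genuinely invokes the lattice/unconditional structure. The trade-off is that your argument only controls sufficiently small slices, while the paper's controls the given $\varepsilon$-slice with an explicit constant; both are ample for refuting the LD2P. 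A further merit of your write-up: your rescaling step (passing from $x$ in the slice to $x/\|x\|\in S_X$) explicitly justifies applying the bound $E_\alpha$, which is defined as a supremum over $S_X$, to points of the slice of norm strictly less than one --- a point the paper's proof uses silently.
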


\begin{proof}
  Assume $X$ is uniformly strictly monotone at $\alpha \in \Gamma.$
  Let $\varepsilon > 0$ such that $E_\alpha(\varepsilon) < 1$.  We may
  and will assume that $\varepsilon < 2/3$.  We will show that the
  diameter of the slice
  $S(e_\alpha^*,\varepsilon)$ is
  bounded away from $2$.
  \begin{claim}
    There exists a $C > 0$ such that
    \begin{equation*}
      \left\|
        x - \left(e_\alpha^*(x) - \frac{\varepsilon}{2} \right)e_\alpha
      \right\|
      \le 1 - C
    \end{equation*}
    for all $x \in S(e_\alpha^*, \varepsilon)$.
  \end{claim}
  \begin{claimproof}
    Let $C := (1 - 3\varepsilon/2)(1 - E_\alpha(\varepsilon)) > 0$.

    If $x \in S(e_\alpha^*, \varepsilon)$
    we set
    \begin{equation*}
      \lambda =
      \frac{e_\alpha^*(x) - \varepsilon/2}{e_\alpha^*(x)}.
    \end{equation*}
    Then, since $e_\alpha^*(x) \le 1$ and $\varepsilon < 2/3$,
    \begin{align*}
      1
      > \lambda
      > \frac{1 - \varepsilon - \varepsilon/2}{e_\alpha^*(x)}
      > 1 - \frac{3}{2}\varepsilon
      > 0
    \end{align*}
    and
    \begin{equation*}
      x - \left(e_\alpha^*(x) - \frac{\varepsilon}{2} \right)e_\alpha
      =
      (1 - \lambda) x
      +
      \lambda (x - e_\alpha^*(x)e_\alpha),
    \end{equation*}
    so by convexity of the norm
    \begin{align*}
      \left\|
        x - \left(e_\alpha^*(x) - \frac{\varepsilon}{2} \right)e_\alpha
      \right\|
      &\le
      (1-\lambda)\|x\|
      +
      \lambda
      \|x - e_\alpha^*(x)e_\alpha\| \\
      &\le
        1 - \lambda + \lambda E_\alpha(\varepsilon)
        \le
        1 - C
    \end{align*}
    as claimed.
  \end{claimproof}
  Let $y, z \in  S(e_\alpha^*, \varepsilon)$.
  We use unconditionality,
  i.e. $\|u\| \le \|v\|$ whenever $|u| \le |v|$,
  and the above claim and get
  \begin{align*}
    \|y - z\|
    &=
      \left\|
      e_\alpha^*(y - z) e_\alpha
      + \bigl(y - e_\alpha^*(y)e_\alpha \bigr)
      - \bigl(z - e_\alpha^*(z)e_\alpha \bigr)
      \right\|\\
    &\le
      \left\|\varepsilon e_\alpha
      + \bigl(y - e_\alpha^*(y)e_\alpha\bigr)
      - \bigl(z - e_\alpha^*(z)e_\alpha\bigr)
      \right\|\\
    &\le
      \left\|
      \bigl(y - e_\alpha^*(y)e_\alpha \bigr)
      +
      \frac{\varepsilon}{2} e_\alpha
      \right\| +
      \left\|
      \bigl(z - e_\alpha^*(z)e_\alpha \bigr)
      -
      \frac{\varepsilon}{2} e_\alpha
      \right\|\\
    &=
      \left\|
      y -
      \left(e_\alpha^*(y) - \frac{\varepsilon}{2}\right) e_\alpha
      \right\| +
      \left\|
      z -
      \left(e_\alpha^*(z) - \frac{\varepsilon}{2}\right) e_\alpha
      \right\|\\
    &\le
      2(1 - C).
  \end{align*}
  Hence the diameter of $S(e_\alpha^*,\varepsilon)$ is strictly
  less than $2$.
\end{proof}

\section{Spaces with 1-unconditional bases that have
  uniformly strictly monotone norms}
\label{sec:1uc-uniformly-strictly-monotone}
Let $X$ be a solid Banach lattice of real functions on a set
$\Gamma$ with a (normalized) 1-unconditional basic set
$(e_\gamma)_{\gamma \in \Gamma}.$ For $A \subset \Gamma,$ let
$\mathbbm{1}_A \in \ell_\infty(\Gamma)$ denote the characteristic
function on $A.$ Note that $\mathbbm{1}_A \in X$ if $A$ is finite.  We
define
\begin{equation*}
  \begin{aligned}
    \mathfrak{L_F}
  =
  \sup\left\{
    \left\|\mathbbm{1}_A\right\| 
    : A \subset \Gamma, |A| < \infty
  \right\} \text{ and }
  \mathfrak{L}
  =
    \left\|\mathbbm{1}_\Gamma\right\|. 
  \end{aligned}
\end{equation*}
Clearly $\mathfrak{L} \ge \mathfrak{L_F}.$ Also, since
$(e_\gamma)_{\gamma \in \Gamma}$ is normalized we have
$\mathfrak{L_F} \ge 1$, and if $X$ is strictly monotone we have
$\mathfrak{L_F} > 1$. When we write $\mathfrak{L} < \infty,$ we
implicitly assume that $\mathbbm{1}_\Gamma$ is an element in $X.$
Clearly $\|\mathbbm{1}_\Gamma\| < \infty$ is equivalent to $X$ being lattice isomorphic to $\ell_\infty(\Gamma).$

For a finite subset $F$ of $\Gamma$ let $P_F: X \to X$ be the
natural projection onto the set $\linspan\{e_\gamma: \gamma \in F\}.$

If $F \subset \Gamma$ is finite and $(a_\gamma)_{\gamma \in F}$ is a
sequence of scalars, then
\begin{equation*}
  \max_{\gamma \in F} |a_\gamma|
  \le \left\| \sum_{\gamma \in F} a_\gamma e_\gamma \right\|
  \le \max_{\gamma \in F} |a_\gamma| \cdot \left\| \sum_{\gamma \in F}
    e_\gamma \right\|
  \le \mathfrak{L_F} \cdot \max_{\gamma \in F}|a_\gamma|.
\end{equation*}
Thus $\clinspan\{e_\gamma: \gamma \in \Gamma\}$ is isomorphic to $c_0(\Gamma)$
if and only if $\mathfrak{L_F} < \infty$.

Let $X$ be $c_0(\mathbb{N})$ with the Nakano norm. The canonical basis
for $c_0(\mathbb{N})$, $(e_n)_{n=1}^\infty$, is then a normalized strictly
monotone 1-unconditional basis for $X$. If we want to show that $X$
has the LD2P, then we can make use of the following fact: For every
$j \in \mathbb N$ we have that for any fixed $r \in (0,1)$
\begin{equation*}
  \inf_{n \neq j}
  \left\{
    \|e_j + r e_n\|
  \right\} = 1.
\end{equation*}
In particular, this holds for $r = 1/\mathfrak{L_F}$.  Actually, we
can replace $e_j$ with any $x \in S_X$ with finite support in the
infimum above. Hence inside every slice $S(x^*, \eps)$ of $B_X$ we can
for every $r \in (0, 1)$ find elements of the form $x \pm re_n.$
This tells us that every such slice has diameter two. We will return
to this example in Section~\ref{sec:rotund-norm-c_0gamma}.

\begin{thm}
  \label{thm:main-1uncond}
  Let $X$ be a strictly monotone solid Banach lattice of real
  functions on the set $\Gamma$ with a basic set
  $(e_\gamma)_{\gamma \in \Gamma}.$
  \begin{enumerate}
  \item\label{item:thm-main-1uncond-pt1}
    Assume $\mathfrak{L_F} < \infty$ and for every $x \in X$ we
    have $\|x\| = \sup_{F \subset \Gamma, |F| < \infty} \|P_Fx\|.$ If there
    exists $\alpha \in \Gamma$ such that
  \begin{equation*}
    \inf_{\beta \neq \alpha} \left\{
      \left\|e_\alpha + \frac{1}{\mathfrak{L_F}} e_\beta\right\|
    \right\} > 1,
  \end{equation*}
  then $X$ fails the LD2P.  In particular, any equivalent renorming of
  $c_0(\Gamma)$ such that the canonical basis $(e_\gamma)_{\gamma \in \Gamma}$
  is strictly monotone and normalized, and there exists
  $\alpha \in \Gamma$ with
  \begin{equation*}
    \inf_{\beta \neq \alpha} \left\{
      \left\|e_\alpha + \frac{1}{\mathfrak{L_F}} e_\beta\right\|
    \right\} > 1,
  \end{equation*}
    fails the LD2P.
  \item\label{item:thm-main-1uncond-pt2} Assume that $X$ is lattice
    isomorphic to $\ell_\infty(\mathbb N).$ If there exists
    $k \in \mathbb N$ such that
  \begin{equation*}
    \inf_{n \neq k} \left\{
      \left\|e_k + \frac{1}{\mathfrak{L}} e_n\right\|
    \right\} > 1,
   \end{equation*}
   then $X$ fails the LD2P.
  \end{enumerate}
\end{thm}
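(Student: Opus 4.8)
The plan is to deduce both parts from Proposition~\ref{prop:unif-sm-not-ld2p-v2} by showing that the stated hypotheses force $X$ to be uniformly strictly monotone at the distinguished index $\alpha$ (resp.\ $k$). Writing $L = \mathfrak{L_F}$ in part~\ref{item:thm-main-1uncond-pt1} and $L = \mathfrak{L}$ in part~\ref{item:thm-main-1uncond-pt2}, and fixing $\delta > 0$ with $\|e_\alpha + \tfrac{1}{L}e_\beta\| \ge 1 + \delta$ for every $\beta \neq \alpha$, I would fix $\varepsilon \in (0, \delta/(1+\delta))$ and prove the single estimate $E_\alpha(\varepsilon) \le 1 - \varepsilon < 1$, which by the definition of uniform strict monotonicity and Proposition~\ref{prop:unif-sm-not-ld2p-v2} gives the failure of the LD2P in both cases.

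For a fixed $x \in S_X$ with $a := e_\alpha^*(x) > 1 - \varepsilon$, I set $r := x - e_\alpha^*(x)e_\alpha$ (so $r(\alpha) = 0$), this being exactly the quantity controlling $E_\alpha(\varepsilon)$, and I aim to bound $\|r\|$ below $1$ uniformly. The argument rests on two estimates. The first is a global comparison of $\|r\|$ with $\|r\|_\infty$: in part~\ref{item:thm-main-1uncond-pt1} the hypothesis $\|y\| = \sup_F\|P_Fy\|$ together with the finite estimate $\|P_Fy\| \le \mathfrak{L_F}\max_{\gamma\in F}|y(\gamma)|$ yields $\|r\| \le \mathfrak{L_F}\|r\|_\infty$, while in part~\ref{item:thm-main-1uncond-pt2} the pointwise inequality $|r| \le \|r\|_\infty\,\mathbbm{1}_\Gamma$ together with solidity yields $\|r\| \le \mathfrak{L}\|r\|_\infty$; in both cases $\|r\| \le L\|r\|_\infty$. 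The second is a coordinatewise bound: for each $\beta \neq \alpha$, solidity gives $\|a e_\alpha + |r(\beta)|e_\beta\| \le \|x\| = 1$, and I claim this forces $|r(\beta)| < (1-\varepsilon)/L$. Indeed, if $|r(\beta)| \ge (1-\varepsilon)/L$, then monotonicity (using $a \ge 1-\varepsilon$) together with the homogeneity identity $(1-\varepsilon)e_\alpha + \tfrac{1-\varepsilon}{L}e_\beta = (1-\varepsilon)\bigl(e_\alpha + \tfrac{1}{L}e_\beta\bigr)$ and the hypothesis give $\|a e_\alpha + |r(\beta)|e_\beta\| \ge (1-\varepsilon)(1+\delta) > 1$, a contradiction.

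Combining the two estimates, $\|r\|_\infty \le (1-\varepsilon)/L$ and hence $\|r\| \le L\|r\|_\infty \le 1-\varepsilon$; taking the supremum over all admissible $x$ gives $E_\alpha(\varepsilon) \le 1 - \varepsilon < 1$, as desired. For the ``in particular'' clause, any equivalent renorming of $c_0(\Gamma)$ for which the canonical basis is strictly monotone, normalized, and $1$-unconditional automatically satisfies $\mathfrak{L_F} < \infty$ and $\|x\| = \sup_F\|P_Fx\|$ (the finite projections of a $c_0$-type element converge in norm), so part~\ref{item:thm-main-1uncond-pt1} applies without change.

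I expect the main obstacle to be obtaining a \emph{uniform} gap below $1$ rather than merely $\|r\| < 1$ for each individual $x$: strict monotonicity of the lattice by itself only delivers the strict but non-quantitative inequality $|r(\beta)|/a < 1/L$, and a compactness argument is unavailable since $\Gamma$ may be uncountable and the slice is not norm-compact. The resolution is to tie $\varepsilon$ quantitatively to $\delta$ and to exploit positive homogeneity of the norm at the scale $1-\varepsilon$, which converts the strict inequality $\|e_\alpha + \tfrac{1}{L}e_\beta\| \ge 1+\delta$ into the uniform lower bound $(1-\varepsilon)(1+\delta) > 1$; this is precisely what upgrades the pointwise strictness into the uniform estimate $E_\alpha(\varepsilon) \le 1 - \varepsilon$.
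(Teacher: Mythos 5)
Your proof is correct, and its skeleton matches the paper's: reduce everything to showing that $X$ is uniformly strictly monotone at $\alpha$ (resp.\ $k$), apply Proposition~\ref{prop:unif-sm-not-ld2p-v2}, and get the norm bound on $r = x - e_\alpha^*(x)e_\alpha$ by combining a uniform bound on the off-$\alpha$ coordinates with the conversion $\|r\| \le \mathfrak{L_F}\|r\|_\infty$ (resp.\ $\|r\| \le \mathfrak{L}\|r\|_\infty$). Where you genuinely differ is in how the coordinate bound is produced. The paper's Claim~\ref{claim:no_ld2p_claim2} fixes $\eta$ with $\eta > (K+1)/(lK+1)$, writes $\eta e_\alpha + \theta e_\beta/K$ as a convex combination involving $e_\alpha + e_\beta/K$, and runs a continuity argument on the auxiliary function $f(\tau)$ to extract a threshold $\mathfrak{a} = \theta/K < 1/K$, after which $\varepsilon$ is chosen so that $\mathfrak{a}K < 1-2\varepsilon$. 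You bypass all of this with positive homogeneity and solidity: if $|r(\beta)| \ge (1-\varepsilon)/L$, then $\|a e_\alpha + |r(\beta)|e_\beta\| \ge (1-\varepsilon)\left\|e_\alpha + \tfrac{1}{L}e_\beta\right\| \ge (1-\varepsilon)(1+\delta) > 1$ once $\varepsilon < \delta/(1+\delta)$, contradicting $\|a e_\alpha + |r(\beta)|e_\beta\| \le \|x\| = 1$. This scaling argument is more elementary, eliminates the parameters $\eta$, $\theta$, $\mathfrak{a}$ and the function $f$ entirely, and makes transparent that it is the displayed infimum hypothesis, not strict monotonicity as such (which the paper invokes mainly to ensure $K = \mathfrak{L_F} > 1$), that drives the estimate; indeed your argument never needs $L > 1$. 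Your handling of the ``in particular'' clause is likewise sound and slightly lighter than the paper's: you obtain $\|x\| = \sup_F \|P_F x\|$ from norm convergence of $P_F x \to x$ for $x \in c_0(\Gamma)$ together with solidity, whereas the paper routes this fact through Proposition~\ref{prop:shrinking-1unc-sup}. Both approaches end with the same quantitative conclusion $E_\alpha(\varepsilon) \le 1 - \varepsilon$.
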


\begin{rem}
  Note that it follows from Day's proof in \cite{MR67351} that when
  $\Gamma$ is uncountable, $\ell_\infty(\Gamma)$ does not have an
  equivalent strictly monotone norm.
\end{rem}

\begin{proof}
  Let $\alpha \in \Gamma$.
  We aim to show that $X$ is uniformly strictly monotone at $\alpha$
  and the proof of \ref{item:thm-main-1uncond-pt1} and
  \ref{item:thm-main-1uncond-pt2} are the same up to a point.

  Assume $K > 1$ is a real number such that
  \begin{equation*}
    l := \inf_{\beta \neq \alpha} \left\{
      \left\|e_\alpha + \frac{1}{K} e_\beta\right\|
    \right\} > 1.
  \end{equation*}
  Fix $\eta < 1$ such that
  \begin{equation*}
    \eta >
    \frac{K + 1}{l K + 1}.
  \end{equation*}
  Note that since $l \le 2$
  \begin{equation*}
    \eta
    >
    \frac{K + 1}{l K + 1}
    >
    \frac{K + 1}{l (K + 1)}
    =
    \frac{1}{l}
    \ge
    \frac{1}{2}.
  \end{equation*}
  \begin{claim}\label{claim:no_ld2p_claim2}
    There exists $\mathfrak{a} < 1/K$ such that for all
    $\eta e_\alpha + a e_\beta \in B_X$ with $a \in \mathbb R$ and $\beta \ne
    \alpha,$ we have $|a| \le \mathfrak{a}$.
  \end{claim}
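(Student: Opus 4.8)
The plan is to use the solidity (1-unconditionality) of the basis together with the already-recorded estimate $\eta > 1/l$. Since $\|\eta e_\alpha + a e_\beta\| = \|\eta e_\alpha + |a|e_\beta\|$ by unconditionality, it suffices to bound $a \ge 0$; so I assume $\eta e_\alpha + a e_\beta \in B_X$ with $a \ge 0$ and $\beta \ne \alpha$, and I aim to show that $a$ is bounded by a constant strictly below $1/K$ that does not depend on $\beta$.

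The key step is a rescaling followed by a monotonicity comparison. Dividing by $\eta > 0$ gives $\|e_\alpha + \tfrac{a}{\eta}e_\beta\| = \tfrac1\eta\|\eta e_\alpha + a e_\beta\| \le \tfrac1\eta$. I would then argue by contradiction: if $a/\eta \ge 1/K$, then $|e_\alpha + \tfrac1K e_\beta| \le |e_\alpha + \tfrac{a}{\eta}e_\beta|$ coordinatewise, so solidity together with the definition of $l$ gives $l \le \|e_\alpha + \tfrac1K e_\beta\| \le \|e_\alpha + \tfrac{a}{\eta}e_\beta\| \le \tfrac1\eta$. This contradicts $\eta > 1/l$ (equivalently $1/\eta < l$), and hence forces $a/\eta < 1/K$, that is $a < \eta/K$.

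Consequently I would set $\mathfrak{a} := \eta/K$; since $\eta < 1$ we have $\mathfrak{a} < 1/K$, and the argument above shows $|a| \le \mathfrak{a}$ for every admissible $a$ and every $\beta \ne \alpha$, which is exactly the claim. I do not anticipate a genuine obstacle here: the only point requiring care is which lower bound on $\eta$ is invoked. The full strength $\eta > (K+1)/(lK+1)$ will be needed later to push $E_\alpha(\varepsilon)$ below $1$, but for the present claim only the weaker consequence $\eta > 1/l$ is used, and it is precisely what defeats the estimate $l \le 1/\eta$. Uniformity in $\beta$ is automatic, since $l$ is the infimum of $\|e_\alpha + \tfrac1K e_\beta\|$ over all $\beta \ne \alpha$, so the constant $\mathfrak{a}$ depends on neither $\beta$ nor $a$.
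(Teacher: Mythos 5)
Your proof is correct, and it takes a genuinely different, more elementary route than the paper's. The paper proves the claim by writing $\eta e_\alpha + \theta e_\beta/K$ as the convex combination $\theta\eta(e_\alpha + e_\beta/K) + (1-\theta)\eta e_\alpha + (1-\eta)\theta e_\beta/K$, bounding $g_\beta(\theta/K) := \|\eta e_\alpha + \theta e_\beta/K\|$ from below by $f(\theta) := \theta\eta l - (1-\theta)\eta - (1-\eta)\theta/K$ via the reverse triangle inequality, and using the full hypothesis $\eta > (K+1)/(lK+1)$ (which is equivalent to $f(1) > 1$) together with continuity of $f$ to pick $\theta \in (0,1)$ with $f(\theta) > 1 \ge g_\beta(a)$; evenness and monotonicity of $g_\beta$ then give $|a| \le \theta/K =: \mathfrak{a}$. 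You instead exploit homogeneity of the norm: after rescaling by $1/\eta$, a coefficient with $|a|/\eta \ge 1/K$ would, by solidity and the definition of $l$ as an infimum over $\beta$, force $l \le \bigl\|e_\alpha + \tfrac{|a|}{\eta}e_\beta\bigr\| \le 1/\eta$, contradicting the already-recorded bound $\eta > 1/l$; this yields the explicit, $\beta$-independent constant $\mathfrak{a} = \eta/K < 1/K$. Both arguments use 1-unconditionality in an essential way (the paper to see that $g_\beta$ is even and non-decreasing on $[0,\infty)$, you for the coordinatewise domination), but your rescaling trick replaces the convexity-plus-continuity step and gets away with the weaker hypothesis. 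One inaccuracy in your closing commentary, though it does not affect your proof: the full strength $\eta > (K+1)/(lK+1)$ is not in fact needed ``later'' --- after the claim, the remainder of the proof of Theorem~\ref{thm:main-1uncond} uses only $\eta < 1$, $0 < \varepsilon < 1 - \eta$ and $\mathfrak{a}K < 1 - 2\varepsilon$ --- so with your argument the whole theorem goes through assuming only $1/l < \eta < 1$; the stronger lower bound on $\eta$ is an artifact of the paper's method of proving the claim, not a requirement of what follows it.
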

  \begin{claimproof}
    For notational reasons, let us introduce the function
    \begin{equation*}
      f(\tau)
      := \tau \eta l - (1 - \tau)\eta - (1 - \eta)\tau/K.
    \end{equation*}
    Rewriting the assumption
    \begin{equation*}
      \eta > \frac{K + 1}{lK + 1}
    \end{equation*}
    gives $f(1) = \eta l - (1 - \eta)/K > 1$.
    Since $f$ is continuous we can find $\theta \in (0, 1)$ such that
    \begin{equation*}
      f(\theta)
      = \theta \eta l - (1 - \theta)\eta - (1 - \eta)\theta/K
      > 1.
    \end{equation*}
    Set $\mathfrak{a} := \theta/K.$
    Let
    $\|\eta e_\alpha + a e_\beta\| \le 1$ for some
    $\beta \in \Gamma \setminus \{\alpha\}$ and $a \in \mathbb R.$
    Define
    \[
      g_\beta(t)
      := \left\|\eta e_\alpha + te_\beta\right\|.
    \]
    Since
    \[
      \eta e_\alpha + \theta e_\beta/K
      = \theta \eta (e_\alpha + e_\beta/K)
      + (1 - \theta)\eta e_\alpha + (1 - \eta) \theta e_\beta/K,
    \]
    we have
    \begin{align*}
      g_\beta(\theta/K)
      &= \|\eta e_\alpha + \theta e_\beta/K\|\\
      &\ge \theta \eta \|e_\alpha + e_\beta/K\|
        - (1 - \theta)\eta \|e_\alpha\| - (1 - \eta) \theta/K \|e_\beta\|\\
      &\ge \theta \eta l - (1 - \theta)\eta  - (1 - \eta) \theta/K\\
      &= f(\theta)\\
      &> 1
        \ge g_\beta(a).
    \end{align*}
    By 1-unconditionality of the basic set
    $(e_\gamma)_{\gamma \in \Gamma},$ we have that $g_\beta$ is even
    and is non-decreasing for $t \ge 0.$
    Hence $|a| \le \theta/K = \mathfrak{a}.$
  \end{claimproof}

  We are now ready to prove that $X$ 
  is uniformly strictly monotone at $\alpha$.
  Let $0 < \varepsilon < 1 - \eta$ be such that
  $\mathfrak{a} K < 1 - 2\varepsilon$.
  We will show that
  \begin{align*}
    E_\alpha(\eps)
    =
    \sup \{\|x - e_\alpha^*(x)e_\alpha\|: x \in S_X, e_\alpha^*(x) > 1 - \eps\}
    \le 1 - \varepsilon.
  \end{align*}
  Let $x \in S_X$ and assume $e_\alpha^*(x) > 1 - \eps$.
  Let $\beta \neq \alpha$.
  Now for some $z_\beta \in B_X$ we have
  \begin{equation*}
    x = e_\alpha^*(x) e_\alpha + e_\beta^*(x) e_\beta + z_\beta
  \end{equation*}
  and by strict monotonicity and $1$-unconditionality
  \begin{equation*}
    1 = \|x\|
    \ge
    \|e_\alpha^*(x) e_\alpha + e_\beta^*(x) e_\beta + z_\beta\|
    > \|\eta e_\alpha + e_\beta^*(x) e_\beta\|.
  \end{equation*}
  From Claim~\ref{claim:no_ld2p_claim2} we get that
  $|e_\beta^*(x)| \le \mathfrak{a} < 1/K$ for all $\beta \ne \alpha$.

  \ref{item:thm-main-1uncond-pt1}.
  We assume $\mathfrak{L_F} < \infty.$ Put $K:= \mathfrak{L_F},$
  and note that $\mathfrak{L_F} > 1$ since $X$ is strictly
  monotone. Therefore $K > 1,$ and we get by the assumption that there
  exists a finite subset $F \subset \Gamma$ such that
  \begin{align*}
    \|x - e_\alpha^*(x)e_\alpha\|
    &\le
      \left\|
      \sum_{\beta \in F \setminus \{\alpha\} } e_\beta^*(x) e_\beta
      \right\| + \varepsilon
      \le
      \mathfrak{a}
      \left\|\sum_{\beta \in F \setminus\{\alpha\}} e_\beta\right\|
      + \varepsilon\\
    &\le
      \mathfrak{a}
      \left\|\sum_{\beta \in F \cup \{\alpha\}} e_\beta\right\|
      + \varepsilon
      <
      \mathfrak{a}\mathfrak{L_F} + \varepsilon
      =
      1 - \varepsilon.
  \end{align*}
  This proves that $X$ is uniformly strictly monotone at $\alpha,$ and
  we have from Proposition~\ref{prop:unif-sm-not-ld2p-v2} that $X$
  fails the LD2P.

  For the particular case, let $X$ be an equivalent renorming of
  $c_0(\Gamma)$ such that the canonical basis
  $(e_\gamma)_{\gamma \in \Gamma}$ is strictly monotone and
  normalized. We then have that $X$ is a strictly monotone solid Banach
  lattice consisting of functions $x: \Gamma \to \mathbb R.$ Now we
  only need to note that under the assumptions on $X$ we have
  $\mathfrak{L_F} < \infty$ and from
  Proposition~\ref{prop:shrinking-1unc-sup} that
  $\|x\| = \sup_{F \subset \Gamma, |F| < \infty} \|P_Fx\|$ for every
  $x \in X.$

  \ref{item:thm-main-1uncond-pt2}.  We assume $\Gamma = \mathbb N$ and
  $\mathfrak{L} < \infty.$ Put $K:= \mathfrak{L}$ and note that
  $\mathfrak{L} > 1$ since $X$ is strictly monotone. Therefore
  $K > 1,$ and with $k = \alpha$ we get
  \begin{align*}
    \|x - e_k^*(x)e_k\|
    \le
      \left\|
      \mathfrak{a}\mathbbm{1}_{\supp(x) \setminus \{k\}}
      \right\|
      < \mathfrak{a}
      \left \|\mathbbm{1}_\Gamma
      \right\|
      = \mathfrak{a}\mathfrak{L}
      < 1 - \eps,
  \end{align*}
  so $E_k(\eps) < 1 - \eps$ and $X$ is therefore uniformly
  strictly monotone at $k.$
  As in \ref{item:thm-main-1uncond-pt1} we get that $X$ fails the
  LD2P by Proposition~\ref{prop:unif-sm-not-ld2p-v2}.
\end{proof}

\begin{thm}\label{thm:c_0-1symm-noLD2P}
  Let $\Gamma$ be a non-empty set and
  let $\|\cdot\|$ be an equivalent norm on
  $c_0(\Gamma)$ such that the canonical basis
  $(e_\gamma)_{\gamma \in \Gamma}$ is normalized
  and 1-symmetric.
  If $\|\cdot\|$ is strictly monotone, in particular if $\|\cdot\|$
  strictly convex, then it fails the LD2P.
\end{thm}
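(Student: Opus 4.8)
The plan is to derive the statement from the ``in particular'' part of Theorem~\ref{thm:main-1uncond}\ref{item:thm-main-1uncond-pt1}, which already handles an arbitrary equivalent renorming of $c_0(\Gamma)$ whose canonical basis is normalized and strictly monotone; the only thing left to supply is the required index $\alpha$. First, if $\|\cdot\|$ is strictly convex, then Proposition~\ref{prop:sc=>sm} makes it strictly monotone, so I may assume throughout that $\|\cdot\|$ is strictly monotone. Since $(e_\gamma)_{\gamma \in \Gamma}$ is $1$-symmetric it is in particular $1$-unconditional, so $\|\cdot\|$ is a lattice norm and $X := (c_0(\Gamma),\|\cdot\|)$ is exactly a strictly monotone renorming of $c_0(\Gamma)$ with normalized canonical basis, i.e. the setting of that particular case. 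The case $|\Gamma| = 1$ gives $X = \mathbb{R}$, which clearly fails the LD2P, so I assume $|\Gamma| \ge 2$.

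The only hypothesis left to verify is the existence of $\alpha \in \Gamma$ with $\inf_{\beta \ne \alpha} \|e_\alpha + \tfrac{1}{\mathfrak{L_F}} e_\beta\| > 1$, and this is where $1$-symmetry does the real work. Since $X$ is isomorphic to $c_0(\Gamma)$ we have $\mathfrak{L_F} < \infty$, so $\tfrac{1}{\mathfrak{L_F}} > 0$. Fix any $\alpha$. For distinct $\beta, \beta' \ne \alpha$, the transposition interchanging $\beta$ and $\beta'$ is a permutation of $\Gamma$ fixing $\alpha$, so $1$-symmetry yields $\|e_\alpha + t e_\beta\| = \|e_\alpha + t e_{\beta'}\|$ for every scalar $t$. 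Hence the quantity $\|e_\alpha + \tfrac{1}{\mathfrak{L_F}} e_\beta\|$ is independent of $\beta \ne \alpha$, and the infimum over $\beta$ is simply this one common value. Because $\tfrac{1}{\mathfrak{L_F}} > 0$ and $\beta \ne \alpha$, we have $|e_\alpha| < |e_\alpha + \tfrac{1}{\mathfrak{L_F}} e_\beta|$ in the lattice sense (the two functions agree off $\beta$ and the coordinate at $\beta$ is strictly larger), so strict monotonicity gives $\|e_\alpha + \tfrac{1}{\mathfrak{L_F}} e_\beta\| > \|e_\alpha\| = 1$. Therefore $\inf_{\beta \ne \alpha} \|e_\alpha + \tfrac{1}{\mathfrak{L_F}} e_\beta\| > 1$, all hypotheses of Theorem~\ref{thm:main-1uncond}\ref{item:thm-main-1uncond-pt1} are met, and $X$ fails the LD2P.

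I expect the genuinely important step to be the collapse of the infimum, not the verification of the lattice structure. For a general strictly monotone norm, every individual term $\|e_\alpha + r e_\beta\|$ strictly exceeds $1$, yet the infimum over the (typically infinite) index set $\{\beta \ne \alpha\}$ may still equal $1$; this is precisely what happens for the Nakano norm, which is strictly convex and does have the LD2P. The role of $1$-symmetry is exactly to force all these terms to coincide, turning ``each term $>1$'' into ``the infimum $>1$'', which is the obstruction to the LD2P isolated in Proposition~\ref{prop:unif-sm-not-ld2p-v2}.
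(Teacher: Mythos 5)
Your proof is correct and takes essentially the same route as the paper's own: invoke the ``in particular'' case of Theorem~\ref{thm:main-1uncond}\ref{item:thm-main-1uncond-pt1}, use 1-symmetry (permutations fixing $\alpha$) to collapse the infimum $\inf_{\beta \neq \alpha}\|e_\alpha + \tfrac{1}{\mathfrak{L_F}}e_\beta\|$ to a single common value, and use strict monotonicity to see that this value exceeds $1$, with Proposition~\ref{prop:sc=>sm} handling the strictly convex case. Your closing remark about the Nakano norm correctly identifies why the collapse of the infimum is the decisive step.
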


\begin{proof}
  Let $X$ be a renorming of
  $c_0(\Gamma)$ such that the canonical basis
  $(e_\gamma)_{\gamma \in \Gamma}$ is strictly monotone, normalized,
  and 1-symmetric. Choose any $\alpha \in \Gamma$.
  By Theorem~\ref{thm:main-1uncond}~\ref{item:thm-main-1uncond-pt1},
  it is enough to show that
  \begin{equation*}
    \inf_{\beta \neq \alpha} \left\{
      \left\|e_\alpha + \frac{1}{\mathfrak{L_F}} e_\beta\right\|
    \right\} > 1.
  \end{equation*}
  But by symmetry we have
  \begin{equation*}
   \left \|e_\alpha + \frac{1}{\mathfrak{L_F}} e_\beta\right\|
    =
    \left\|e_\alpha + \frac{1}{\mathfrak{L_F}} e_\gamma\right\|
  \end{equation*}
  for all $\beta,\gamma \in \Gamma \setminus \{\alpha\}$,
  so it is enough to observe that for $\beta \neq \alpha$
  \begin{equation*}
    \left\|e_\alpha + \frac{1}{\mathfrak{L_F}} e_\beta\right\|
    >
    \|e_\alpha\| = 1,
  \end{equation*}
  by strict monotonicity.

  The strictly convex case now follows from
  Proposition~\ref{prop:sc=>sm}.
\end{proof}

\begin{rem}
  It was shown in \cite[Corollary~6]{MR1348481} that
  $\ell_\infty(\mathbb N)$ admits no equivalent
  strictly convex 1-symmetric norm.
  It can also be shown that $\ell_\infty(\mathbb N)$ does not admit
  a strictly monotone 1-symmetric norm.
\end{rem}

\begin{quest}
  Let $X$ be a strictly monotone renorming of $c_0(\mathbb N)$
  such that the canonical basis $(e_n)_{n = 1}^\infty$ is normalized.
  Are then the following two conditions equivalent?
  \begin{enumerate}
    \item $X$ fails the LD2P;
    \item There exists $j \in \mathbb N$ such that
      $\inf_{n \neq j}
      \left\{
        \left\|e_j + \frac{1}{\mathfrak{L_F}} e_n\right\| \right\}
      > 1.$
  \end{enumerate}
\end{quest}

Let us end this section with some remarks about
the assumption $\mathfrak L_{\mathfrak{F}} < \infty$ in
Theorem~\ref{thm:main-1uncond}.

First let us give an example of a Banach space $X$
with a 1-symmetric basis and a strictly convex norm, which has the LD2P,
but which is not uniformly strictly monotone at any coordinate
and with $\mathfrak{L}_{\mathfrak{F}} = \infty$.

\begin{exmp}
  \label{ex:orlicz-SC1symLD2P}
  Define a function $M : [0,\infty) \to [0,\infty)$ by
  \begin{equation*}
    M(t)
    =
    \begin{cases}
      0, & t = 0;\\
      \frac{1}{4}e^2 \cdot e^{-1/t}, & t \in (0,\frac{1}{2}); \\
      t^2 , & t \ge \frac{1}{2}.
    \end{cases}
  \end{equation*}
  Then $M$ is a convex, non-decreasing continuous function
  satisfying $M(0) = 0$ and $\lim_{t \to \infty} M(t) = \infty$.
  If we consider the space $\ell_M$ of all sequences
  $x = (a_1,a_2,\ldots)$ of scalars such that
  $\Phi(x/\lambda) := \sum_{n=1}^\infty M(|a_n|/\lambda) < \infty$
  for some $\lambda > 0$, then $\ell_M$ is a Banach space with norm
  \begin{equation*}
    \|x\| = \inf \{
    \lambda > 0 : \sum_{n=1}^\infty M(|a_n|/\lambda) \le 1
    \}.
  \end{equation*}
  The unit vectors $\{e_n\}_{n=1}^\infty$ form
  a 1-symmetric basis of $h_M = \clinspan\{e_n\}_{n=1}^\infty$
  (see e.g. \cite[p.~115]{MR0500056}).
 
  Let us show that $X = h_M$ is also strictly
  convex, has the LD2P (it is actually M-embedded), is not uniformly
  strictly monotone at any coordinate, but
  $\mathfrak{L}_{\mathfrak{F}} = \infty$.

  A straightforward computation shows that
  $M$ is strictly convex, so $h_M$ is strictly convex
  (see e.g. \cite[p.~56]{MR1410390}).

  From Example~III.1.4 in \cite{MR1238713} we have that $h_M$
  is M-embedded since the complementary Orlicz function $M^*$
  satisfies the $\Delta_2$ condition at zero while $M$
  fails it.
  In particular, $h_M$ has the LD2P by \cite[Proposition~2.1]{MR2091678}.
  Let us prove the claimed properties of $M$ and $M^*$.
  We have that $M'(t) = M(t)\frac{1}{t^2}$ on $(0,\frac{1}{2})$
  and hence
  \begin{equation*}
    \lim_{t \to 0^+} \frac{t M'(t)}{M(t)}
    =
    \lim_{t \to 0^+} \frac{t M(t)/t^2}{M(t)}
    =
    \lim_{t \to 0^+} \frac{1}{t}
    = + \infty.
  \end{equation*}
  Since the above limit is infinite $M$ fails the $\Delta_2$-condition
  at zero (see e.g. \cite[p.~140]{MR0500056}).
  We have that $M'(t)$ is continuous and strictly increasing
  on $[0,\infty)$ and has a continuous and strictly increasing
  inverse $q(t)$.
  The complementary function to $M$ is given by
  $M^*(u) = \int_0^u q(t) \, dt$ and
  $M^*$ satisfies the $\Delta_2$-condition at zero
  (see the remarks following Proposition~4.b.2 in
  \cite[p.~148]{MR0500056})
  since the above limit is strictly greater than $1$
  (here we use that $q$ is continuous).

  Next let us show that $\mathfrak{L}_{\mathfrak{F}} = \infty$.
  For $s_n = \sum_{i=1}^n e_i$ in $h_M$ we have
  \begin{equation*}
    \Phi(s_n/\lambda)
    =
    \sum_{i=1}^n M(1/\lambda)
    = n M(1/\lambda).
  \end{equation*}
  It is well known that the infimum in the norm is attained
  so we need only solve $\Phi(s_n/\lambda) = 1$
  to get $M(1/\lambda) = 1/n$.
  For $n \ge 4$ we use
  $M(1/\lambda) = \frac{1}{4}e^2 e^{-\lambda}$ and get
  \begin{equation}\label{eq:norm_LF_hM_ex}
    \|s_n\| = \left\|\sum_{i=1}^n e_i \right\|
    = \lambda = 2 - 2\ln(2) + \ln(n)
  \end{equation}
  From this we see that $\mathfrak{L}_{\mathfrak{F}} = \infty$.

  Finally, we show that $h_M$ is not uniformly strictly monotone
  at any coordinate. By 1-symmetry of the basis we need
  only consider the first coordinate.

  Let $0 < d < 1/2$.
  For $n \in \mathbb{N}$ define
  \begin{equation*}
    x_n = (1-d)e_1 + \sum_{i=1}^n \frac{1}{k}e_{i+1}
  \end{equation*}
  where $k$ is chosen (depending on $n$) such that $\|x_n\| = 1$.
  Using \eqref{eq:norm_LF_hM_ex} we have
  \begin{equation*}
    \|x_n - e_1^*(x_n)e_1\|
    = \frac{1}{k}
    \left\|\sum_{i=1}^n e_{i+1}\right\|
    =
    \frac{\|s_n\|}{k}
    =
    \frac{2 - 2\ln(2) + \ln(n)}{k}.
  \end{equation*}
  Let us find $k$.
  Solve
  \begin{equation*}
    \Phi(x_n)
    =
    (1-d)^2 + \frac{n}{4}e^{2}e^{-k}
    =
    1
  \end{equation*}
  for $k$ to get
  \begin{equation*}
    e^{-k} = e^{-2}\frac{4}{n}(2d-d^2)
  \end{equation*}
  hence
  \begin{equation*}
    k = 2 - 2\ln(2) + \ln(n) - \ln(2d-d^2).
  \end{equation*}
  We therefore have
  \begin{align*}
    \|x_n - e_1^*(x_n)e_1\|
    &=
    \frac{2 - 2\ln(2) + \ln(n)}{2 - 2\ln(2) + \ln(n) - \ln(2d-d^2)}
    \\
    &=
    1 +
    \frac{\ln(2d-d^2)}{2 - 2\ln(2) + \ln(n) - \ln(2d-d^2)}.
  \end{align*}
  Since $d$ is fixed and $0 < d < 1/2$ we have
  that $\ln(2d-d^2)$ is a fixed negative number and
  $\ln(n)$ can be made arbitrarily large,
  hence $\lim_{n \to \infty}\|x_n - e_1^*(x_n)e_1\| = 1$.
  It follows that $h_M$ is not uniformly strictly monotone
  at any coordinate.
\end{exmp}

As proved in \cite[III.Examples~1.4]{MR1238713}, if $M$ is an Orlicz
function which fails the $\Delta_2$-condition at zero, while its
complementary function $M^*$ satisfies it, then the Orlicz sequence
space $h_M$ is M-embedded and thus has the LD2P. The same conclusion
holds for the preduals $d(w, 1)_*$ of the Lorentz sequence space
$d(w, 1)$.  Example~\ref{ex:orlicz-SC1symLD2P} shows that there exist
strictly convex Orlicz sequence spaces $h_M$ with a 1-symmetric basis
and the LD2P. As for Lorentz sequences spaces $d(w, 1),$ however,
their preduals $d(w, 1)_*$ are never strictly convex since their
canonical basis is monotone, but not strictly monotone.
We will come back to these preduals once more in the
next section.

\begin{rem}
  In general, $\mathfrak{L_F} = \infty$ does not tell us anything
  about uniform strict monotonicity at a coordinate or the LD2P.

  Any Orlicz space with $M$ non-degenerate
  (hence $M$ strictly increasing)
  satisfies $\mathfrak{L_F} = \infty$.
  The Orlicz space $h_M$ in Example~\ref{ex:orlicz-SC1symLD2P}
  has the LD2P and is not uniform strict monotonicity at any coordinate.
  On the other hand, $\ell_p$, $1 \le p < \infty$,
  clearly does not have the LD2P.
  If $x \in B_{\ell_p}$ with $e_1^*(x) = (1-d)$ we have
  $\|x - e_{1}^*(x)e_1\| = (1 - (1-d)^p)^{1/p}$
  which for small $d$ is roughly $(pd)^{1/p}$
  so $\ell_p$ is uniformly strictly monotone at a coordinate.

  Furthermore, the diameter of the slice $S(e_i^*,\delta)$
  does not depend on the rate at which
  $\mathfrak{L}_n = \|\sum_{i=1}^n e_i\|$ grows.
  Indeed, for $h_M$ in Example~\ref{ex:orlicz-SC1symLD2P} we have
  $\mathfrak{L}_n \approx \ln(n)$,
  for $\ell_p$ we have $\mathfrak{L}_n = n^{1/p}$,
  while for the predual $d(w,1)_*$ where $w_n = 1/n$
  we have $\mathfrak{L}_n \approx n/\ln(n)$.
\end{rem}

\section{Strictly convex renormings with the LD2P}
\label{sec:rotund-norm-c_0gamma}
In the first part of the paper, our aim was to find properties on
strictly convex solid Banach lattices of real functions, that
prevented such spaces from having the LD2P. In particular, we found
that any strictly convex norm on $c_0(\Gamma)$ with a 1-symmetric
basis, fails the LD2P. One of the main results in this section, tells
us that this is close to optimal, even in the non-separable case,
since $c_0(\Gamma),$ for any set $\Gamma,$ will be shown to admit a
strictly convex renorming with a 1-unconditional basis and the
LD2P.
In the separable case $c_0(\mathbb N)$ with the Nakano norm
is such a renorming.
The Nakano renorming of $c_0(\mathbb{N})$ will play a crucial
role in the proofs below and it will for example allow us to
construct strictly convex renormings with the LD2P of any Banach space
containing complemented copies of $c_0(\mathbb N).$

We start by a general construction of a norm. Let
$\varphi: \ell_\infty(\mathbb N) \to [0, \infty]$ be the function
given by $\varphi(x) = \sum_{n = 1}^\infty x_n^{2n}$ where $x = (x_n).$ Let
$(X, \|\cdot\|)$ be any Banach space. Define the function
$\Phi: X^{**} \oplus_\infty \ell_\infty(\mathbb N) \to [0, \infty]$ by
\begin{align}
  \label{eq:17}
  \Phi(z) = \|x\| + \varphi(y),
\end{align}
where $z = (x, y)$ and $\|\cdot\|$ denotes the canonical norm in the
bidual of $X.$ To avoid heavy use of parentheses, we will sometimes
just write $\Phi(x, y)$ instead of $\Phi((x, y)),$ $\|\cdot\|$ instead
of $\|(\cdot, \cdot)\|$ when $\|\cdot\|$ is a norm on
$X^{**} \oplus \ell_\infty(\mathbb N),$ and $Px$ instead of $P(x)$
when $P$ is an operator on $X^{**}.$ For
  \begin{align*}
    Z_\infty &:= (X \oplus_\infty c_0(\mathbb N), \|(\cdot, \cdot)\|_\infty),
  \end{align*}
  we have
  \begin{align*}
    Z_\infty^{**}
    &= (X^{**} \oplus_\infty \ell_\infty(\mathbb N),
      \|(\cdot, \cdot)\|_\infty).
  \end{align*}
We start by collecting some properties of $\Phi.$

\begin{lem}
  \label{lem:phi-properties-1}
  Let $\Phi$ be the function given in (\ref{eq:17}).
  We have:
  \begin{enumerate}
  \item\label{phi-properties-1-pt1}
    The set
    \[
      C = \{(x, y) \in X^{**} \oplus_\infty \ell_\infty(\mathbb N):
      \Phi(x, y) \le 1\},
    \]
    is convex, symmetric, and absorbing with $\frac{1}{2}B_{Z_\infty^{**}}
    \subseteq C \subseteq B_{Z_\infty^{**}}.$

    In particular, if $(x, y) \in C,$ then $\|(x, y)\|_{\infty} \le
    \Phi(x, y).$
  \item\label{phi-properties-1-pt2}
    The Minkowski functional on $C$ is a norm
    $\vertiii{(\cdot, \cdot)}$ given by
    \[
      \vertiii{(x, y)} = \inf\{\lambda > 0: \Phi((x, y)/\lambda) \le
      1\}.
    \]
  \item\label{phi-properties-1-cont}
    $\Phi$ is continuous on $Z_\infty.$
  \item\label{phi-properties-1-submul}
    For each $(x, y) \in Z_\infty^{**},$
    if $\lambda > 1,$ then
    \begin{equation*}
      \Phi((x, y)/\lambda) \le \frac{1}{\lambda}\Phi(x, y).
    \end{equation*}
  \item\label{phi-properties-1-sphere}
    For each $(x, y) \in Z_\infty$,
    if $\vertiii{(x, y)} = 1$, then $\Phi(x, y) = 1$.
  \end{enumerate}
\end{lem}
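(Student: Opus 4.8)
The plan is to derive all five parts from two structural features of $\Phi$: writing $z=(x,y)$, it is the sum of the degree-one homogeneous bidual norm $\|\cdot\|$ and the function $\varphi(y)=\sum_n y_n^{2n}$, and both summands are convex and even. For part~\ref{phi-properties-1-pt1}, convexity of each $t\mapsto t^{2n}$ and of $\|\cdot\|$ makes $\Phi$ convex, while evenness of the exponents $2n$ makes $\Phi(-z)=\Phi(z)$; hence $C=\{\Phi\le 1\}$ is convex and balanced. To see $C$ is absorbing I would scale down: for $z=(x,y)$ with $\|y\|_\infty=M$ and $0<s<1/M$ one has $\varphi(sy)\le\sum_n(sM)^{2n}=(sM)^2/(1-(sM)^2)$, which together with $s\|x\|$ tends to $0$ as $s\to 0^{+}$, so $sz\in C$ for small $s$. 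For the inclusions, $\|z\|_\infty\le\tfrac12$ gives $\Phi(z)\le\tfrac12+\sum_n(\tfrac12)^{2n}=\tfrac12+\tfrac13<1$, proving $\tfrac12 B_{Z_\infty^{**}}\subseteq C$; conversely, if $\Phi(z)\le 1$ then $\|x\|\le\Phi(z)\le1$, and no coordinate can satisfy $|y_n|>1$ (a single term $y_n^{2n}$ would already exceed $1$), so $\|y\|_\infty\le1$ and thus $\|z\|_\infty\le1$, giving $C\subseteq B_{Z_\infty^{**}}$ and the ``in particular'' bound $\|z\|_\infty\le 1$ for $z\in C$.

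For part~\ref{phi-properties-1-pt2} I would invoke the standard fact that the Minkowski gauge of a convex, balanced, absorbing set is a seminorm, with the stated formula $\vertiii{z}=\inf\{\lambda>0:z/\lambda\in C\}=\inf\{\lambda>0:\Phi(z/\lambda)\le1\}$, since membership in $C$ is exactly $\Phi\le1$. That $\vertiii{\cdot}$ separates points follows from $C\subseteq B_{Z_\infty^{**}}$, which forces $\vertiii{z}\ge\|z\|_\infty$; together with $\tfrac12 B_{Z_\infty^{**}}\subseteq C$ this also yields $\vertiii{z}\le 2\|z\|_\infty$, so $\vertiii{\cdot}$ is equivalent to $\|\cdot\|_\infty$. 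Part~\ref{phi-properties-1-submul} is then a termwise estimate: for $\lambda>1$ and $n\ge1$ one has $\lambda^{-2n}\le\lambda^{-1}$, so $\varphi(y/\lambda)=\sum_n\lambda^{-2n}y_n^{2n}\le\lambda^{-1}\varphi(y)$, and adding $\|x/\lambda\|=\lambda^{-1}\|x\|$ gives $\Phi(z/\lambda)\le\lambda^{-1}\Phi(z)$.

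For part~\ref{phi-properties-1-cont}, continuity of $x\mapsto\|x\|$ reduces matters to showing $\varphi$ is continuous on $c_0$. Here I would use that $\varphi$ is convex, finite at every point of $c_0$ (since $y_n\to0$ makes the tail summable), and bounded above by $\tfrac13$ on the ball $\tfrac12 B_{c_0}$; a real-valued convex function on a normed space that is bounded above on a neighborhood of a point is continuous there, and a convex function finite on the whole space and continuous at one point is continuous everywhere. Hence $\varphi$, and therefore $\Phi=\|\cdot\|+\varphi$, is continuous on $Z_\infty$.

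Finally, for part~\ref{phi-properties-1-sphere} let $z\in Z_\infty$ with $\vertiii{z}=1$ and put $h(\lambda)=\Phi(z/\lambda)$, which is continuous on $(0,\infty)$ by part~\ref{phi-properties-1-cont}. Since $C$ is convex and $0\in C$, the set $\{\lambda>0:z/\lambda\in C\}$ is upward closed, and having infimum $\vertiii{z}=1$ it contains $(1,\infty)$; letting $\lambda\to1^{+}$ and using continuity of $h$ yields $\Phi(z)=h(1)\le1$. If instead $\Phi(z)<1$, continuity of $h$ would give $h(\lambda)\le1$ for some $\lambda<1$, whence $\vertiii{z}<1$, a contradiction; so $\Phi(z)\ge1$ and therefore $\Phi(z)=1$. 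I expect the only genuine obstacle to be part~\ref{phi-properties-1-cont}: because $\varphi$ is an infinite sum whose termwise bounds $y_n^{2n}\le\|y\|_\infty^{2n}$ blow up once $\|y\|_\infty\ge1$, continuity cannot come from uniform convergence on all balls, and it is essential to exploit that $y\in c_0$; this is also precisely why part~\ref{phi-properties-1-sphere} must be restricted to $Z_\infty$, where $\Phi$ is finite and continuous.
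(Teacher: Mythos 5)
Your proposal is correct, and on parts \ref{phi-properties-1-pt1}, \ref{phi-properties-1-pt2} and \ref{phi-properties-1-submul} it runs parallel to the paper's proof (the paper gets absorbency from the inclusion $\tfrac12 B_{Z_\infty^{**}}\subseteq C$ rather than your direct geometric-series scaling, which is cosmetic; note also that your weaker reading of the ``in particular'' clause is the right one --- as literally stated, $\|z\|_\infty\le\Phi(z)$ for $z\in C$ is false, e.g.\ for $z=(0,\tfrac12 e_1)$ where $\Phi(z)=\tfrac14<\tfrac12=\|z\|_\infty$, and both your argument and the paper's actually yield only $C\subseteq B_{Z_\infty^{**}}$, which is all that is ever used). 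The genuine differences are in parts \ref{phi-properties-1-cont} and \ref{phi-properties-1-sphere}. For \ref{phi-properties-1-cont} the paper observes that the partial sums $\|x\|+\sum_{n=1}^N y_n^{2n}$ are continuous and asserts that continuity ``on a dense subset'' suffices, which is not a valid principle as written (the intended repair is local uniform convergence of the tail, using $y\in c_0$); your argument --- $\varphi$ is convex, finite on all of $c_0$, and bounded above on $\tfrac12 B_{c_0}$, hence locally Lipschitz everywhere by the standard theorem on convex functions --- is a complete and more rigorous proof. For \ref{phi-properties-1-sphere} both proofs obtain $\Phi(z)\le 1$ from $\lim_{\lambda\to1^+}\Phi(z/\lambda)$, but for the reverse inequality the paper takes a Hahn--Banach norming functional $z^*$ with $z^*(z)=\vertiii{z}=1$ and derives a contradiction by perturbing a coordinate of $y$ (or dilating $x$) to produce $w$ with $\Phi(w)\le 1$ but $z^*(w)>1$, whereas you argue purely along the ray: if $\Phi(z)<1$, continuity of $\lambda\mapsto\Phi(z/\lambda)$ at $\lambda=1$ gives $\Phi(z/\lambda)\le1$ for some $\lambda<1$, contradicting $\vertiii{z}=1$. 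Your route is shorter, avoids duality entirely, and uses nothing beyond part \ref{phi-properties-1-cont}; the paper's heavier perturbation argument does exhibit the concrete mechanism (mass can be added to a coordinate without leaving $C$) that reappears in the almost-squareness proof of Theorem~\ref{thm:gen-uncountable-propA1}, but for this lemma your argument is the more economical one. Both are correct.
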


\begin{proof}
  \ref{phi-properties-1-pt1}.
  Since $\|\cdot\|$ and $\varphi$ are convex and symmetric, $C$
  is convex and symmetric as well.

  If $\|(x, y)\|_\infty \le 1/2,$ then
  \[
    \Phi(x, y)
    = \frac{1}{2} + \sum_{n = 1}^\infty \frac{1}{2^{2n}}
    = \frac{1}{2} + \frac{1}{3}
    < 1,
  \]
  so $\frac{1}{2}B_{Z_\infty^{**}} \subseteq C$ from which we can also
  infer that $C$ is absorbing.

  If $(x, y) \in \lambda C,$ then
  \[
    \frac{1}{\lambda}\|x\|
    \le \Phi((x,y)/\lambda)
    \le 1,
  \]
  so $\|x\| \le \lambda.$ Similarly,
  \[
    \frac{1}{\lambda^{2n}}|y_n|^{2n}
    \le \varphi(y/\lambda)
    \le \Phi((x, y)/\lambda)
    \le 1
  \]
  and $|y_n| \le \lambda$ for all $n \in \mathbb N.$ In particular, if
  $\lambda = 1,$ we get $\|(x, y)\|_\infty \le \Phi(x, y).$

  \ref{phi-properties-1-pt2}.
  The Minkowski functional on $C$ is given by
  \[
    \vertiii{(x, y)}
    = \inf\{\lambda > 0: (x, y) \in \lambda C\},
  \]
  which means that $(x, y)/\lambda \in C.$
  The rest follows from \ref{phi-properties-1-pt1}.

  \ref{phi-properties-1-cont}.
  We have that
  \begin{equation*}
    (x,y) \to x \to \|x\|
  \end{equation*}
  is continuous and that for any $N$
  \begin{equation*}
    (x,y) \to y \to \sum_{n=1}^N y_n^{2n}
  \end{equation*}
  is continuous.
  It follows that $\Phi(\cdot, \cdot)$ is continuous
  on a dense subset of $Z_\infty$ which is enough.

  \ref{phi-properties-1-submul}.
  We have
  \begin{equation*}
    \Phi((x,y)/\lambda)
    =
    \frac{1}{\lambda}\|x\|
    +
    \sum_{n=1}^\infty (y_n/\lambda)^{2n}
    \le
    \frac{1}{\lambda}
    \Phi(x,y).
  \end{equation*}

  \ref{phi-properties-1-sphere}.
  Let $z = (x,y) \in Z_\infty$ with $\vertiii{(x,y)} = 1$.
  By \ref{phi-properties-1-cont} $\Phi$ is continuous on $Z_\infty$
  so that
  \begin{equation*}
    \Phi(z) = \lim_{\lambda \to 1^+} \Phi(z/\lambda)
    \le 1.
  \end{equation*}
  Assume for contradiction that $\Phi(z) < 1$.

  Find $z^* = (x^*, y^*)$ with $\vertiii{(x^*,y^*)} = 1$
  such that
  \begin{equation*}
    z^*(z) = x^*(x) + y^*(y) = 1.
  \end{equation*}
  Note that $x^*(x) \ge 0$ and $y^*(y) \ge 0.$ Indeed, if
  $x^*(x) < 0,$ then since $\Phi(-x, y) = \Phi(x, y) \le 1$ we have
  $\vertiii{(-x, y)} \le 1,$ while
  $x^*(-x) + y^*(y) > x^*(x) + y^*(y) = 1.$ Similarly, we get a
  contradiction if $y^*(y) < 0.$

  Consider first the case where $y^*(y) > 0.$
  We can write
  $y = (y_n)_{n = 1}^\infty$ and $y^* = (y_n^*)_{n = 1}^\infty$.
  Note that $|y_n| < 1$ for all $n \in \mathbb{N}$.
  Choose $k \in \mathbb{N}$
  such that $y_k^* \cdot y_k \ne 0$ and find $t > |y_k|$ with
  \begin{equation*}
    \Phi(z) + t^{2k} - (y_k)^{2k} \le 1.
  \end{equation*}
  Define $w = (x, (v_n)_{n = 1}^\infty)$ where
  $v_n = y_n$ for $n \ne k$ and $v_k = t\cdot \sgn(y_k^*)$.
  Since
  \begin{equation*}
    \Phi(w)
    = \Phi(z) + t^{2k} - (y_k)^{2k}
    \le 1,
  \end{equation*}
  we get $\vertiii{w} \le 1.$
  On the other hand
  \begin{equation*}
    z^*(w) - z^*(z)
    = |y_k^*| \cdot t - y_k^* \cdot y_k
    > 0.
  \end{equation*}
  So $1 \ge z^*(w) > z^*(z) = 1$, which is a contradiction.

  Consider now the case $x^*(x) > 0.$ Choose $t > 1$ such that
  \begin{equation*}
    \Phi(z) + (t - 1)\|x\|
    \le 1.
  \end{equation*}
  Define $w = (tx, y).$ Since
  \begin{equation*}
    \Phi(w)
    = \Phi(z) + (t - 1)\|x\|
    \le 1,
  \end{equation*}
  we get $\vertiii{w} \le 1.$
  On the other hand
  \begin{equation*}
    z^*(w) - z^*(z)
    = tx^*(x) + y^*(y) - (x^*(x) + y^*(y))
    = (t - 1)x^*(x)
    > 0.
  \end{equation*}
  So $1 \ge z^*(w) > z^*(z) = 1$, which is a contradiction.

  We conclude that $\Phi(x,y) = 1$ as desired.
\end{proof}

Assume for the moment that our $X$ is a strictly convex Banach space
and has a normalized shrinking 1-unconditional basis
$(e_\gamma)_{\gamma \in \Gamma}.$
Note that for $e_\gamma \in X$ and $e_n \in c_0(\mathbb N),$ we have
\begin{align*}
  \vertiii{(e_\gamma, 0)}
  = \vertiii{(0, e_n)}
  = 1.
\end{align*}
Let $\mathcal A = \Gamma \cup \mathbb N$ and define by
$f_\alpha = (e_\gamma, 0)$ when $\alpha = \gamma \in \Gamma$ and
$f_\alpha = (0, e_n)$ when $\alpha = n \in \mathbb N.$ Since
$(e_n)_{n \in \mathbb N}$ is a shrinking 1-unconditional basis for
$c_0(\mathbb N),$ we get by a standard argument that
$(f_\alpha)_{\alpha \in \mathcal A}$ is a shrinking 1-unconditional
basis for $Z_\infty.$
Note that if $(x, y), (u, v) \in Z_\infty^{**}$ and $|(x, y)| \le |(u, v)|$,
meaning
$|x_\gamma| \le |u_\gamma|$ for all $\gamma \in \Gamma$
and
$|y_n| \le |v_n|$ for all $n \in \mathbb{N}$,
then $\Phi(x, y) \le \Phi(u, v)$.

In the presence of a 1-unconditional basis
$\Phi$ will have some additional properties.

\begin{lem}\label{lem:phi-properties-2}
  If $X$ is a strictly convex Banach space with a
  normalized shrinking 1-unconditional basis $(e_\gamma)_{\gamma \in \Gamma},$
  then the function $\Phi$ given in (\ref{eq:17})
  satisfies the following properties:
  \begin{enumerate}
  \item\label{phi-properties-2:bidual-elem}
    For each $(x, y) \in Z_\infty^{**},$ we have
    \begin{equation*}
      \Phi(x, y) = \sup_G \Phi(P_G^{**}(x, y)),
    \end{equation*}
    where $P_G: Z_\infty^{**} \to Z_\infty^{**}$ denotes the natural
    projection onto the finite dimensional subspace
    $\linspan(f_\alpha)_{\alpha \in G} \subset Z_\infty^{**}$ and where
    $G$ is a finite subset of $\mathcal{A}.$
  \item\label{phi-properties-2:bidual-sphere}
    For each $(x, y) \in Z_\infty^{**} ,$
    if $\vertiii{(x, y)} = 1,$ then $\Phi(x, y) \le 1.$
  \end{enumerate}
\end{lem}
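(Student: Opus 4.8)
The plan is to reduce both assertions to the finite-dimensional coordinate projections, where $\Phi$ is completely understood via Lemma~\ref{lem:phi-properties-1}, and then pass to the limit through the supremum in \ref{phi-properties-2:bidual-elem}. For \ref{phi-properties-2:bidual-elem}, note that any finite $G \subseteq \mathcal A$ splits uniquely as $G = F \cup E$ with $F = G \cap \Gamma$ and $E = G \cap \mathbb N$, so the projection factors as $P_G^{**}(x, y) = (P_F^{**} x, P_E^{**} y)$, where $P_F$ is the coordinate projection in $X$ and $P_E$ the one in $c_0(\mathbb N)$. By (the proof of) Proposition~\ref{prop:shrinking-1unc-sup} applied to $X$, we have $P_F^{**} x = \sum_{\gamma \in F} x(e_\gamma^*) e_\gamma$ and $\sup_F \|P_F^{**} x\| = \|x\|$; on the $c_0$-side, since $P_E$ has finite rank its bidual $P_E^{**}$ is the canonical finite-rank extension, i.e.\ the truncation of $y \in \ell_\infty(\mathbb N)$ to the coordinates in $E$, so that $\varphi(P_E^{**} y) = \sum_{n \in E} y_n^{2n}$ and $\sup_E \sum_{n \in E} y_n^{2n} = \varphi(y)$ because all terms are non-negative. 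Thus $\Phi(P_G^{**}(x, y)) = \|P_F^{**} x\| + \sum_{n \in E} y_n^{2n}$, and since the two summands depend on the disjoint index sets $F$ and $E$ independently, the supremum over finite $G$ decouples:
\begin{equation*}
  \sup_G \Phi(P_G^{**}(x, y))
  = \sup_F \|P_F^{**} x\| + \sup_E \sum_{n \in E} y_n^{2n}
  = \|x\| + \varphi(y)
  = \Phi(x, y),
\end{equation*}
which is exactly \ref{phi-properties-2:bidual-elem}.

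For \ref{phi-properties-2:bidual-sphere}, I would first record that the coordinatewise monotonicity of $\Phi$ noted just before the lemma makes $\vertiii{\cdot}$ a monotone norm: if $|z| \le |w|$ then $\Phi(z/\lambda) \le \Phi(w/\lambda)$ for every $\lambda > 0$, whence $\vertiii{z} \le \vertiii{w}$ by the Minkowski-functional formula in Lemma~\ref{lem:phi-properties-1}\ref{phi-properties-1-pt2}. Applying this to $|P_G^{**}(x, y)| \le |(x, y)|$ gives $\vertiii{P_G^{**}(x, y)} \le \vertiii{(x, y)} = 1$. Crucially, $P_G^{**}(x, y)$ is finitely supported and therefore lies in $Z_\infty$, so Lemma~\ref{lem:phi-properties-1} is available to it: either $\vertiii{P_G^{**}(x, y)} < 1$, forcing $P_G^{**}(x, y) \in C$ and hence $\Phi(P_G^{**}(x, y)) \le 1$ directly, or $\vertiii{P_G^{**}(x, y)} = 1$, forcing $\Phi(P_G^{**}(x, y)) = 1$ by Lemma~\ref{lem:phi-properties-1}\ref{phi-properties-1-sphere}. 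In either case $\Phi(P_G^{**}(x, y)) \le 1$, and taking the supremum over $G$ via part \ref{phi-properties-2:bidual-elem} yields $\Phi(x, y) \le 1$.

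The substance of the argument is concentrated in \ref{phi-properties-2:bidual-elem}, which is the engine driving \ref{phi-properties-2:bidual-sphere}. The main obstacle to keep in mind is that $\Phi$ need not be continuous on the bidual $Z_\infty^{**}$ — the continuity in Lemma~\ref{lem:phi-properties-1}\ref{phi-properties-1-cont} is asserted only on $Z_\infty$ — so one cannot prove \ref{phi-properties-2:bidual-sphere} by naively letting $\lambda \to 1^+$ in the bidual. The representation \ref{phi-properties-2:bidual-elem} sidesteps this precisely by pushing the finite-dimensional pieces $P_G^{**}(x, y)$ back into $Z_\infty$, where the full strength of Lemma~\ref{lem:phi-properties-1} applies. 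The only points demanding care are the identification of $P_E^{**}$ as the coordinate truncation on $\ell_\infty(\mathbb N)$ and the verification that the two suprema genuinely decouple; both are routine once the factorization $P_G^{**} = P_F^{**} \oplus P_E^{**}$ is in place.
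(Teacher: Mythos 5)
Your proposal is correct. Part \ref{phi-properties-2:bidual-elem} is in substance identical to the paper's argument: both factor $P_G^{**}$ through the two summands, apply Proposition~\ref{prop:shrinking-1unc-sup} to the $X$-side, use non-negativity of the terms of $\varphi$ on the $\ell_\infty(\mathbb N)$-side, and let the two suprema decouple (the paper phrases this as a limit along a product-ordered net of sets $F \cup \{1,\dots,N\}$, you as a sum of suprema over independent finite sets $F$ and $E$; the content is the same, and your formulation actually matches the statement's ``arbitrary finite $G$'' more directly). For part \ref{phi-properties-2:bidual-sphere} you close the argument by a different combination of tools. The paper, after noting that $P_G^{**}(x,y) \in Z_\infty$, writes
\begin{equation*}
  \Phi(P_G^{**}(x,y))
  = \inf_{\lambda > 1} \Phi\bigl(P_G^{**}((x,y)/\lambda)\bigr)
  \le \inf_{\lambda > 1} \Phi\bigl((x,y)/\lambda\bigr)
  \le 1,
\end{equation*}
using only continuity of $\Phi$ on $Z_\infty$
(Lemma~\ref{lem:phi-properties-1}~\ref{phi-properties-1-cont}),
coordinatewise monotonicity of $\Phi$, and the definition of the Minkowski functional. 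You instead transfer monotonicity from $\Phi$ to the norm $\vertiii{\cdot}$, obtain $\vertiii{P_G^{**}(x,y)} \le 1$, and handle the cases $\vertiii{P_G^{**}(x,y)} < 1$ (a standard Minkowski-functional fact, valid since $C$ is convex and contains $0$) and $\vertiii{P_G^{**}(x,y)} = 1$, where you invoke
Lemma~\ref{lem:phi-properties-1}~\ref{phi-properties-1-sphere}.
Both routes are valid and both hinge on the same key point, which you correctly single out: $P_G^{**}(x,y)$ lies in $Z_\infty$, where Lemma~\ref{lem:phi-properties-1} is available, whereas $\Phi$ is not known to be continuous on the bidual. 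The difference is one of economy: the paper never needs the sphere property \ref{phi-properties-1-sphere}, whose proof is the nontrivial Hahn--Banach part of Lemma~\ref{lem:phi-properties-1}, while your case split trades the limit-in-$\lambda$ argument for that heavier (but already established) tool.
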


\begin{proof}
  \ref{phi-properties-2:bidual-elem}.
  Let
  $\mathcal F = \{F \subset \Gamma: |F| < \infty\}$ and
  $\mathcal G = \{G \in \mathcal A: |G| < \infty\}$. Order
  $\mathcal F$ by set inclusion and $\mathcal G$ by the rule
  $F_1 \times \{1, \ldots N_1\} \le F_2 \times \{1, \ldots N_2\}$ if
  $F_1 \subseteq F_2$ and $N_1 \le N_2.$ Since
  $(e_\gamma^*)_{\gamma \in \Gamma}$ is a shrinking 1-unconditional
  basis for $Z_\infty$ and
  $P_G^{**}(x, y) = (P_F^{**}x, P_N^{**}y)$ for
  every $G = F \times \{1, \ldots, N\} \subset \mathcal A,$ we get using
  Proposition~\ref{prop:shrinking-1unc-sup} that
  \begin{align*}
    \Phi(x, y)
    &= \|x\| + \varphi(y)\\
    &= \sup_{F \in \mathcal F} \|P_F^{**}x\| + \sup_{N \in \mathbb N}
      \varphi(P_N^{**}y)\\
    &= \lim_{\mathcal F} \|P_F^{**}x\| + \lim_{N \to \infty}\varphi(P_N^{**}y) \\
    &= \lim_{\mathcal G} \Phi(P_G^{**}(x,y))
      = \sup_{G}\Phi(P_G^{**}(x,y)).
  \end{align*}

  \ref{phi-properties-2:bidual-sphere}.
  Using \ref{phi-properties-2:bidual-elem}
  and Lemma~\ref{lem:phi-properties-1}~\ref{phi-properties-1-cont}
  we have
  \begin{align*}
    \Phi(x,y)
    &=
    \sup_G \Phi(P_G^{**}(x,y))
    =
    \sup_G \inf_{\lambda > 1} \Phi(P_G^{**}((x,y)/\lambda)) \\
    &\le
    \sup_G \inf_{\lambda > 1} \Phi((x,y)/\lambda)
    \le
    1.
  \end{align*}
  Note that, for the second equality,
  we used $P_{G}^{**}(x,y) \in Z_\infty$,
  which holds by equation \eqref{eq:15} in
  Proposition~\ref{prop:shrinking-1unc-sup}.
\end{proof}

\begin{thm}\label{thm:gen-uncountable-propA1}
  Let $X$ be a strictly convex Banach space
  and let
  \begin{equation*}
    Z := (X \oplus_\infty c_0(\mathbb N), \vertiii{\cdot})
  \end{equation*}
  be the renorming of $Z_\infty$ defined above.

  The following statements are true:
  \begin{enumerate}
  \item\label{item:thm-gup-sc}
    The Banach space $Z$ is strictly convex.
  \item\label{item:thm-gup-asq}
    The Banach space $Z$ is almost square.
  \item\label{item:thm-gup-ld2p}
    The Banach space $Z$ has the LD2P.
  \end{enumerate}
  If, in addition, $X$ has a shrinking 1-unconditional basis,
  then the following statements are true:
  \begin{enumerate}[resume]
  \item\label{item:thm-gup-1uncond}
    The Banach space $Z$ has a 1-unconditional basis.
  \item\label{item:thm-gup-bidual}
    The Banach space $(X^{**} \oplus_\infty \ell_\infty(\mathbb N),
    \vertiii{\cdot})$ is the bidual of
    $Z.$
  \end{enumerate}
\end{thm}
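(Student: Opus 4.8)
The plan is to treat the five assertions in turn, the first three using only strict convexity of $X$ together with the structural Lemma~\ref{lem:phi-properties-1}, and the last two using the shrinking $1$-unconditional basis together with Lemma~\ref{lem:phi-properties-2}.

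\emph{Strict convexity.} Take $z=(x_1,y_1)$ and $w=(x_2,y_2)$ in $S_Z$ with $\vertiii{(z+w)/2}=1$. By Lemma~\ref{lem:phi-properties-1}\ref{phi-properties-1-sphere} we have $\Phi(z)=\Phi(w)=\Phi((z+w)/2)=1$. Convexity of $\|\cdot\|$ and of each $t\mapsto t^{2n}$ gives $\Phi((z+w)/2)\le\tfrac12\Phi(z)+\tfrac12\Phi(w)$, so the standing equality forces equality in the midpoint inequality separately for $\|\cdot\|$ on the $X$-coordinate and for $t\mapsto t^{2n}$ on every $\ell_\infty$-coordinate. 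Strict convexity of $t\mapsto t^{2n}$ yields $y_1=y_2$, whence $\Phi(z)=\Phi(w)=1$ gives $\|x_1\|=\|x_2\|$; combined with $\|(x_1+x_2)/2\|=\tfrac12(\|x_1\|+\|x_2\|)$ and strict convexity of $X$ this gives $x_1=x_2$, so $z=w$.

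\emph{Almost square and LD2P.} Given $z_1,\dots,z_m\in S_Z$, write $z_i=(x_i,w_i)$ and, since each $w_i\in c_0(\mathbb N)$, choose $N$ so large that $|w_{i,N}|<\eps/2$ for all $i$. Put $y=(0,e_N)$; then $\Phi(0,e_N)=1$, so $\vertiii{y}=1$. Using $\varphi(\cdot/\lambda)\le\lambda^{-2}\varphi(\cdot)$ for $\lambda\ge1$ and $\Phi(z_i)=1$ one estimates, for $\lambda=1+\eps$,
\[
  \Phi\bigl((z_i+y)/\lambda\bigr)
  \le \frac{1}{\lambda}+\Bigl(\frac{1+w_{i,N}}{\lambda}\Bigr)^{2N},
\]
and the right-hand side is $\le1$ once $N$ is large. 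Hence $\vertiii{z_i+y}\le1+\eps$ for every $i$, so $Z$ is almost square; the LD2P then follows since almost square spaces have the LD2P \cite{MR3415738}.

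\emph{$1$-unconditional basis.} The monotonicity of $\Phi$ recorded before Lemma~\ref{lem:phi-properties-2}---namely $|(x,y)|\le|(u,v)|$ implies $\Phi(x,y)\le\Phi(u,v)$---passes to $\vertiii{\cdot}$: applying it to $(x,y)/\lambda$ and $(u,v)/\lambda$ for each $\lambda$ shows $\vertiii{(x,y)}\le\vertiii{(u,v)}$. Since $\tfrac12 B_{Z_\infty^{**}}\subseteq C\subseteq B_{Z_\infty^{**}}$ gives $\|\cdot\|_\infty\le\vertiii{\cdot}\le2\|\cdot\|_\infty$, the $\vertiii{\cdot}$-topology coincides with the $\|\cdot\|_\infty$-topology, so the shrinking basis $(f_\alpha)$ of $Z_\infty$ remains a basis of $Z$, and the displayed monotonicity makes it $1$-unconditional.

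\emph{The bidual.} This is the main point. Equivalence of $\vertiii{\cdot}$ and $\|\cdot\|_\infty$ gives $Z^*=Z_\infty^*$ and $Z^{**}=Z_\infty^{**}=X^{**}\oplus_\infty\ell_\infty(\mathbb N)$ as sets, with the same weak$^*$ topology, so only the bidual norm must be identified. First one checks that $\{\vertiii{\cdot}\le1\}=C$ on $Z_\infty^{**}$, using Lemma~\ref{lem:phi-properties-2}\ref{phi-properties-2:bidual-sphere} and the scaling monotonicity $\Phi(sz)\le\Phi(z)$ for $0\le s\le1$. By Goldstine's theorem $B_{Z^{**}}=\overline{B_Z}^{\,w^*}$, so it suffices to prove $C=\overline{B_Z}^{\,w^*}$. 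The inclusion $\overline{B_Z}^{\,w^*}\subseteq C$ holds because $C$ is weak$^*$-closed: $\|\cdot\|$ is weak$^*$-lower semicontinuous on $X^{**}$ and $\varphi=\sup_N\sum_{n\le N}y_n^{2n}$ is a supremum of weak$^*$-continuous functions, so $\Phi$ is weak$^*$-lsc. Conversely, given $(x,y)\in C$, the net $P_G^{**}(x,y)$ lies in $Z_\infty$, satisfies $\Phi(P_G^{**}(x,y))\le\Phi(x,y)\le1$ by Lemma~\ref{lem:phi-properties-2}\ref{phi-properties-2:bidual-elem}, hence belongs to $B_Z$, and converges weak$^*$ to $(x,y)$ as in Proposition~\ref{prop:shrinking-1unc-sup}. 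Thus $C=\overline{B_Z}^{\,w^*}=B_{Z^{**}}$, and since the Minkowski functional of $C$ is $\vertiii{\cdot}$, the bidual norm is exactly $\vertiii{\cdot}$. I expect this last step to be the main obstacle: it requires simultaneously that $C$ be weak$^*$-closed and that the finite-dimensional truncations exhaust $C$ from within $B_Z$.
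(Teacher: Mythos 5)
Your proposal is correct in all five parts, and it follows the paper's overall architecture: Lemma~\ref{lem:phi-properties-1}~\ref{phi-properties-1-sphere} plus termwise convexity of $\Phi$ for \ref{item:thm-gup-sc}, perturbation by a far-out coordinate vector $(0,e_N)$ for \ref{item:thm-gup-asq}, almost squareness implies LD2P for \ref{item:thm-gup-ld2p}, monotonicity of $\Phi$ for \ref{item:thm-gup-1uncond}, and identification of the set $C$ with the weak$^*$ closure of $B_Z$ for \ref{item:thm-gup-bidual}. Two of your arguments, however, genuinely diverge from the paper's, and in both cases yours is the tighter one. For \ref{item:thm-gup-sc}, the paper reduces to the claim that $\frac{1}{2}\left[\Phi(z+w)+\Phi(z-w)\right]>\Phi(z)$ for \emph{every} $w\neq 0$, justifying the $X$-component by strict convexity of $\|\cdot\|$; as stated this is false (take $z=(x,0)$ and $w=(tx,0)$ with $x\neq 0$ and $0<t<1$: both sides equal $\|x\|$), since rotundity never prevents equality in $\frac{1}{2}\bigl(\|x+u\|+\|x-u\|\bigr)\ge\|x\|$ when $x+u$ and $x-u$ are positively proportional. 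Your route --- force termwise equality, get $y_1=y_2$ from strict convexity of $t\mapsto t^{2n}$, deduce $\|x_1\|=\|x_2\|$ from $\Phi(z)=\Phi(w)=1$, and only then invoke rotundity of $X$ in its definitional form --- supplies exactly the constraint ($\|x_1\|=\|x_2\|$) that rules out the collinear case, so it is the correct repair of this step. For \ref{item:thm-gup-bidual}, the paper proves only the inclusion $\overline{B_Z}^{\,w^*}\subseteq B_Y$ (via the finite projections, continuity of $\Phi$ on $Z_\infty$, and Lemma~\ref{lem:phi-properties-2}~\ref{phi-properties-2:bidual-sphere}), leaving implicit the reverse inclusion that Goldstine requires for equality of the balls; you prove both, the forward one by the cleaner observation that $\Phi$ is weak$^*$ lower semicontinuous (bidual norm plus a supremum of weak$^*$-continuous functions), and the reverse one by the $P_G^{**}$-approximation with Lemma~\ref{lem:phi-properties-2}~\ref{phi-properties-2:bidual-elem}, which is precisely where the shrinking hypothesis enters. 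Finally, in \ref{item:thm-gup-asq} you avoid the paper's density reduction to finitely supported vectors by using directly that the $c_0$-coordinates of arbitrary points of $S_Z$ vanish at infinity; the only care needed is that $N$ must simultaneously satisfy $|w_{i,N}|<\eps/2$ for all $i$ and $\bigl((1+\eps/2)/(1+\eps)\bigr)^{2N}\le\eps/(1+\eps)$, and both hold for all sufficiently large $N$, so your argument goes through.
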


\begin{proof}
  \ref{item:thm-gup-sc}.
  Let $a,b \in S_Z$.
  Assume that $\frac{1}{2}\vertiii{a + b} = 1$.
  From Lemma~\ref{lem:phi-properties-1}~\ref{phi-properties-1-sphere}
  we have
  \begin{equation*}
    \Phi(a)
    = \Phi(b)
    = \Phi\left(\frac{1}{2}(a +  b)\right)
    = 1.
  \end{equation*}
  Thus
  \begin{align*}
    &
    \frac{
      \Phi\left(
      \frac{1}{2}(a +  b)
      +
      \frac{1}{2}(a -  b)
      \right)
      +
      \Phi\left(
      \frac{1}{2}(a +  b)
      -
      \frac{1}{2}(a -  b)
      \right)
    }{2}\\
    &=
    \frac{\Phi(a) + \Phi(b)}{2}
    = \Phi\left(\frac{1}{2}(a + b)\right)
  \end{align*}
  It is enough to show that $\frac{1}{2}(a - b) = 0$.

  To this end it is enough to show that for all
  $z, w \in Z$ we have that
  \begin{equation*}
    \frac{\Phi(z + w) + \Phi(z - w)}{2} > \Phi(z)
  \end{equation*}
  whenever $w \neq 0$.

  Let $z = (x,y), w = (u,v) \in Z$
  and assume $w \neq 0$.
  We always have
  \begin{align*}
    &\frac{1}{2}\left[\Phi(z + w) + \Phi(z - w)\right] - \Phi(z)\\
    &= \left[\frac{\|x + u\| + \|x - u\|}{2} -
      \|x\|\right]
    + \sum_{n = 1}^\infty\left[\frac{(y_n + v_n)^{2n} +
        (y_n - v_n)^{2n}}{2} -  y_n^{2n}\right]\\
    &\ge 0
  \end{align*}
  because the functions $s \mapsto \|s\|$ and
  $f_n(t) = t^{2n}, n \in \mathbb{N},$ are convex,
  hence all expressions in the brackets are non-negative.
  If $w \ne 0,$ we either have $u \ne 0,$ and thus
  \begin{align*}
    \frac{\|x + u\| + \|x - u\|}{2} -
    \|x\|
    > 0
  \end{align*}
  since $\|\cdot\|$ is strictly convex on $X,$ or we have that
  there exists $n \in \mathbb N$ such that $v_n \ne 0,$ and thus
  \begin{align*}
    \frac{(y_n + v_n)^{2n} +
      (y_n - v_n)^{2n}}{2} -  y_n^{2n}
    > 0
  \end{align*}
  since $f_n$ is strictly convex.
  In either case we get
  \begin{equation*}
    \frac{\Phi(z + w) + \Phi(z - w)}{2} - \Phi(z)
    > 0,
  \end{equation*}
  as desired.

  \ref{item:thm-gup-asq}.  Put
  $\Delta = \linspan\{e_\gamma\}_{\gamma \in \Gamma} \oplus_\infty
  c_{00}(\mathbb{N}).$ Since $\Delta$ is dense in $Z,$ it suffices to
  prove that for $z_1, \ldots z_n \in \Delta \cap S_Z$ and $\eps > 0,$
  there exists $h \in S_Z$ such that $\|z_i + h\| < 1 + \eps$ for
  every $i = 1, \ldots, n.$ To this end write $z_i = (x_i, y_i)$ for
  $i \in \{1, \ldots, n\}.$ Let $1 - \eps/2< r < 1$ and find
  $N > \max\{\supp y_i: i = 1, \ldots, n\}$ such that
  $\varphi(y_i + re_N) < \varphi(y_i) + \eps/2$ for every
  $i = 1, \ldots, n.$ Then for $g= (0, re_N)$ we get
  \[
    \Phi(z_i + g)
    = \|x_i\| + \varphi(y_i + re_N)
    \le \|x_i\| + \varphi(y_i) + \eps/2
    = 1 + \eps/2,
  \]
  so $\Phi((z_i + g)/(1 + \eps/2)) \le 1$
  by Lemma~\ref{lem:phi-properties-1}~\ref{phi-properties-1-submul}
  and hence $\vertiii{z_i + g} \le 1 + \eps/2.$
  Put $h = g/\vertiii{g}.$ Then
  \begin{align*}
    \vertiii{z_i + h}
    \le \vertiii{z_i + g} + \vertiii{h - g}
    \le 1 + \eps/2 + (1 - r)
    < 1 + \eps,
  \end{align*}
  which is want we wanted.

  \ref{item:thm-gup-ld2p}.
  Almost squareness is known to imply the LD2P
  (see e.g \cite[Theorem~1.3 and Proposition~2.5]{MR3415738}).
  Hence \ref{item:thm-gup-ld2p}
  follows from \ref{item:thm-gup-asq}.

  \ref{item:thm-gup-1uncond}.
  As already noted $\vertiii{f_\alpha} = 1$ for all
  $\alpha \in \Gamma \times \mathbb N.$ To see that
  $(f_\alpha)_{\alpha \in \mathcal A}$ is a 1-unconditional shrinking
  basis for $Z= (X \oplus_\infty c_0(\mathbb N), \vertiii{\cdot}),$ we
  first observe that $\clinspan(f_\alpha) = Z.$ Moreover, if
  $|b_\gamma| \le |a_\gamma|$ for $\gamma \in F \subset \Gamma$ and
  $|b_n| \le |a_n|$ for $n \in E \subset \mathbb N$, with
  $|E|,|F| <\infty$, then for
  \begin{equation*}
    u_b = \sum_{\gamma \in F} b_\gamma f_\eta
    \quad \mbox{and} \quad
    v_b = \sum_{n \in E} b_n f_n
  \end{equation*}
  and
  \begin{equation*}
    u_a = \sum_{\gamma \in F} a_\gamma f_\eta
    \quad \mbox{and} \quad
    v_a = \sum_{n \in E} a_n f_n
  \end{equation*}
  we have
  \begin{equation*}
    \Phi((u_b,v_b)/\lambda) \le \Phi((u_a,v_a)/\lambda)
  \end{equation*}
  for all $\lambda > 0$.
  Hence
  $\vertiii{(u_b,v_b)} \le \vertiii{(u_a,v_b)}$,
  and thus $(f_\alpha)_{\alpha \in \mathcal{A}}$
  is a $1$-unconditional basis for $Z$.

  \ref{item:thm-gup-bidual}.
  Let $Y :=
  (
  X^{**} \oplus_\infty \ell_\infty(\mathbb{N}),
  \vertiii{\cdot}
  ).$
  By Remark~18.2 in \cite[p.~33]{MR4501243} it is enough
  to show that the weak$^*$ closure of $B_Z$ in $Y$ is $B_Y$
  (be aware that there is a typo in that remark).
  Let $z_\beta = (x_\beta,y_\beta) \in B_Z$.
  Assume $z = (x,y) \in Y$ such that $z_\beta \to z$ weak$^*$.
  We need to show that $\vertiii{z} \le 1$.

  Let $G \subset \mathcal{A}$ with $|G| < \infty$.
  We have
  \begin{equation*}
    \|P_Gz_\beta - P_G^{**}z\|_\infty \to_{\beta} 0,
  \end{equation*}
  where $P_G$ denotes the projection onto the
  finite dimensional space
  $\linspan(f_\alpha)_{\alpha \in G} \subset Z$.
  From Lemma~\ref{lem:phi-properties-1}~\ref{phi-properties-1-cont}
  we get
  \begin{equation*}
    \Phi(P_Gz_\beta)
    \to_{\beta}
    \Phi(P_G^{**}z).
  \end{equation*}
  By Lemma~\ref{lem:phi-properties-2}~\ref{phi-properties-2:bidual-sphere}
  (and 1-unconditionality of the basis) we have
  \begin{equation*}
    \Phi(P_G z_\beta) \le 1
    \ \mbox{so that}
    \ \Phi(P_G^{**} z) \le 1.
  \end{equation*}
  Since $G \subset \mathcal{A}$ with $|G| < \infty$
  was arbitrary we get
  \begin{equation*}
    \Phi(z) = \sup_G\Phi(P_G^{**} z) \le 1
  \end{equation*}
  from Lemma~\ref{lem:phi-properties-2}~\ref{phi-properties-2:bidual-elem},
  and thus $\vertiii{z} \le 1$.
\end{proof}

The following result should be compared with the main result of
\cite{2024arXiv240803737C} saying that a Banach space which admits a
smooth norm and contains a complemented copy of $\ell_1(\mathbb N)$
has an equivalent norm which is simultaneously smooth and octahedral.

\begin{cor}\label{cor:renorm_str_conv_asq}
  If a Banach space is strictly convex renormable and contains a complemented
  copy of $c_0(\mathbb N)$, then it has an equivalent norm which
  is both strictly convex and almost square.

  In particular, any separable Banach space $X$ which contains a copy
  of $c_0,$ admits a norm which is simultaneously strictly convex and
  almost square.
\end{cor}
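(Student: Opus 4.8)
The plan is to reduce the corollary to a direct application of Theorem~\ref{thm:gen-uncountable-propA1}. Suppose $W$ is strictly convex renormable and contains a complemented copy of $c_0(\mathbb N)$. Then I would write $W = X \oplus c_0(\mathbb N)$ as a topological direct sum (as Banach spaces, up to isomorphism), where $X$ is the complementary subspace. First I renorm $W$ so that it is strictly convex; this is possible by assumption. A subspace of a strictly convex space is strictly convex in the inherited norm, so the complementary factor $X$ inherits a strictly convex norm. This gives me a strictly convex Banach space $X$, which is exactly the hypothesis needed to run the construction in Theorem~\ref{thm:gen-uncountable-propA1}.

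Next I form $Z = (X \oplus_\infty c_0(\mathbb N), \vertiii{\cdot})$ with the norm built from the function $\Phi$ in \eqref{eq:17}. By Theorem~\ref{thm:gen-uncountable-propA1}~\ref{item:thm-gup-sc} and~\ref{item:thm-gup-asq}, $Z$ is simultaneously strictly convex and almost square. The only thing left is to transfer this norm back to $W$: since $Z$ is isomorphic to $X \oplus c_0(\mathbb N)$ as a Banach space (the $\oplus_\infty$ and the original decomposition norm on $W$ are equivalent, being two norms on the same finite-codimension-style direct sum), the norm $\vertiii{\cdot}$ pulls back along the isomorphism $W \cong X \oplus_\infty c_0(\mathbb N)$ to an equivalent norm on $W$. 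Strict convexity and almost squareness are both invariant under linear isomorphism (almost squareness being a property of the unit sphere, it is preserved by the isometry onto $Z$, and the equivalence of norms on $W$ only affects the choice of representative, not these isometric-type properties once we transport via the isometry). Thus $W$ acquires an equivalent norm that is both strictly convex and almost square.

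For the particular case, let $X$ be separable and contain a copy of $c_0(\mathbb N)$. By Sobczyk's theorem, a copy of $c_0$ inside a separable space is automatically complemented, so the complemented-copy hypothesis is satisfied. Separable spaces are strictly convex renormable by \cite[Theorem~9]{MR1501880}, so $X$ is strictly convex renormable as well. Hence the first part applies and $X$ admits an equivalent norm that is simultaneously strictly convex and almost square.

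The main obstacle I anticipate is the bookkeeping in the transfer step: one must be careful that the construction of Theorem~\ref{thm:gen-uncountable-propA1} starts from the \emph{strictly convex} factor $X$ rather than from an arbitrary complement, and that replacing the original decomposition norm of $W$ by the equivalent $\oplus_\infty$-norm does not disturb the two target properties. Strict convexity is manifestly norm-dependent, so the point is that we do not keep the original norm of $W$ at all --- we simply declare the pulled-back norm $\vertiii{\cdot}$ to be our new equivalent norm, and both desired properties hold by construction for $Z$ and hence for its isometric copy. Once this is understood the proof is essentially a citation of Theorem~\ref{thm:gen-uncountable-propA1} together with Sobczyk's theorem for the separable case.
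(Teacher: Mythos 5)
Your proposal is correct and follows essentially the same route as the paper's own proof: decompose the space as $X \oplus_\infty c_0(\mathbb N)$, note that $X$ inherits strict convexity from the equivalent strictly convex norm, apply Theorem~\ref{thm:gen-uncountable-propA1}~\ref{item:thm-gup-sc} and~\ref{item:thm-gup-asq}, and invoke Sobczyk's theorem together with Clarkson's renorming result for the separable case. The only difference is that you spell out the norm-transfer step (pulling $\vertiii{\cdot}$ back along the isomorphism), which the paper leaves implicit.
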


\begin{proof}
  A Banach space $Y$ with a strictly convex norm containing
  a complemented copy of $c_0(\mathbb N)$ is isomorphic to
  \begin{equation*}
    Z_\infty = X \oplus_\infty c_0
  \end{equation*}
  where $X$ is a subspace of $Y$.
  Since the norm of $Y$ is strictly convex
  the same holds for the norm of $X$.
  The result now follows from Theorem~\ref{thm:gen-uncountable-propA1}.

  The particular case follows from the first part and Sobczyk's lemma.
\end{proof}

\begin{cor}\label{cor:renorm_c_0_gamma}
  For any infinite set $\Gamma$,
  the Banach space $c_0(\Gamma)$ has a strictly convex
  renorming which is almost square and
  has a 1-unconditional basis.
\end{cor}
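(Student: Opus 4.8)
The plan is to obtain the desired renorming as an instance of Theorem~\ref{thm:gen-uncountable-propA1}, feeding in a strictly convex copy of $c_0(\Gamma)$ itself and afterwards absorbing the auxiliary $c_0(\mathbb N)$ summand back into $c_0(\Gamma)$.

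First I would equip $X := c_0(\Gamma)$ with Day's norm \cite{MR67351}, which serves as the strictly convex input space. This norm is locally uniformly rotund, hence strictly convex; its value depends only on the moduli of the coordinates, so the canonical unit vector basis $(e_\gamma)_{\gamma \in \Gamma}$ is $1$-unconditional; and since Day's norm is equivalent to $\|\cdot\|_\infty$ the dual is $\ell_1(\Gamma) = \clinspan\{e_\gamma^*\}$, whence the basis is also shrinking. Thus $X$ is a strictly convex Banach space with a shrinking $1$-unconditional basis, which is exactly the hypothesis needed for all five parts of Theorem~\ref{thm:gen-uncountable-propA1}.

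Applying that theorem to this $X$, the space $Z := (X \oplus_\infty c_0(\mathbb N), \vertiii{\cdot})$ is strictly convex by \ref{item:thm-gup-sc}, almost square by \ref{item:thm-gup-asq}, and carries a $1$-unconditional basis by \ref{item:thm-gup-1uncond}. It then remains only to recognize $Z$ as a renorming of $c_0(\Gamma)$. By Lemma~\ref{lem:phi-properties-1}\ref{phi-properties-1-pt1} the norm $\vertiii{\cdot}$ is equivalent to the $\oplus_\infty$-norm of $Z_\infty$, so as a Banach space $Z$ is isomorphic to $c_0(\Gamma) \oplus_\infty c_0(\mathbb N) = c_0(\Gamma \sqcup \mathbb N)$. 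Since $\Gamma$ is infinite we have $|\Gamma \sqcup \mathbb N| = |\Gamma|$, so a bijection of index sets induces an isometry $c_0(\Gamma \sqcup \mathbb N) \cong c_0(\Gamma)$; transporting $\vertiii{\cdot}$ across it yields an equivalent norm on $c_0(\Gamma)$ with all three desired properties. I do not expect any genuine obstacle here, since Theorem~\ref{thm:gen-uncountable-propA1} carries all the analytic weight; the only points requiring care are the verification that Day's norm is simultaneously strictly convex, shrinking, and $1$-unconditional, and the cardinality absorption $|\Gamma \sqcup \mathbb N| = |\Gamma|$ that permits the extra summand to be reindexed into $\Gamma$.
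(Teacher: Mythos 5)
Your proof is correct and follows essentially the same route as the paper: the paper's own argument simply notes that $c_0(\Gamma) = c_0(\Gamma) \oplus_\infty c_0$ (your cardinality absorption, performed before rather than after applying the theorem), equips $c_0(\Gamma)$ with Day's norm, and invokes Theorem~\ref{thm:gen-uncountable-propA1}. Your additional verifications --- that Day's norm makes the canonical basis $1$-unconditional and shrinking, and that $\vertiii{\cdot}$ is equivalent to the $Z_\infty$-norm so the reindexing is legitimate --- are points the paper leaves implicit, and they are checked correctly.
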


\begin{proof}
  The result follows from Theorem~\ref{thm:gen-uncountable-propA1}
  if we just note that
  $c_0(\Gamma) = c_0(\Gamma) \oplus_\infty c_0$
  and that we can equip $c_0(\Gamma)$ with a strictly convex
  norm (e.g. Day's norm \cite[Chapter~9.4.1]{MR4501243}).
\end{proof}

As mentioned earlier, the
predual $d(w, 1)_*$ of the Lorentz space $d(w, 1)$ is never strictly
convex. However, we do have the following result.

\begin{cor}
  \label{cor:lorentz-predual}
  The predual $d(w, 1)_*$ of the Lorentz space $d(w, 1),$ admits a
  strictly convex renorming with a 1-unconditional basis and the LD2P.
\end{cor}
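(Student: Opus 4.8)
The plan is to realize $d(w,1)_*$, up to isomorphism, in the shape $X \oplus_\infty c_0(\mathbb N)$ for a \emph{strictly convex} Banach space $X$ carrying a \emph{shrinking $1$-unconditional basis}, and then simply to invoke Theorem~\ref{thm:gen-uncountable-propA1} in full. The soft point is that one never has to identify the complement of $c_0$: the only structural input needed is a complemented copy of $c_0$ in $d(w,1)_*$.

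First I would record the structural facts. The canonical basis $(d_n)$ of $d(w,1)_*$ is a normalized $1$-symmetric, hence $1$-unconditional, basis; it is shrinking because its coefficient functionals are the unit vectors of $d(w,1)$, whose closed span is all of $d(w,1) = (d(w,1)_*)^*$. Thus $d(w,1)_*$ is a separable solid Banach lattice with $\|\cdot\|_\infty \le \|\cdot\|$, and it is non-reflexive since its bidual is $d(w,1)^*$, which contains $\ell_\infty$. The crux is that $d(w,1)_*$ contains a complemented copy of $c_0$. One way: the normalized flat blocks $\bigl(\sum_{i \in I_k} d_i\bigr)\big/\bigl\|\sum_{i \in I_k} d_i\bigr\|$ over disjoint intervals with $|I_k| \to \infty$ are equivalent to the $c_0$-basis because the fundamental function $\mathfrak{L}_n = \|\sum_{i=1}^n d_i\|$ grows sublinearly (for $w_n = 1/n$ the final remark of Section~\ref{sec:1uc-uniformly-strictly-monotone} gives $\mathfrak{L}_n \approx n/\ln n$), and the interval-averaging projection is bounded on the symmetric space $d(w,1)_*$; alternatively, $d(w,1)_*$ is non-reflexive and M-embedded, hence contains $c_0$, which is complemented by Sobczyk's theorem since the space is separable. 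In any case $d(w,1)_* \cong V \oplus_\infty c_0$ for some closed subspace $V$, using that for two summands every direct-sum norm is equivalent to the $\oplus_\infty$-norm.

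Next I would make two observations. Since $c_0 \oplus_\infty c_0 \cong c_0$, the previous line gives
\[
  d(w,1)_* \oplus_\infty c_0 \cong V \oplus_\infty c_0 \oplus_\infty c_0 \cong V \oplus_\infty c_0 \cong d(w,1)_*,
\]
so $d(w,1)_*$ absorbs an extra $\oplus_\infty c_0$ summand up to isomorphism. Separately, being a separable solid Banach lattice with $\|\cdot\|_\infty \le \|\cdot\|$, the space $d(w,1)_*$ admits an equivalent strictly convex norm (separability), hence by the equivalence of the three conditions listed after Proposition~\ref{prop:sc=>sm} from \cite{MR2352727} it admits an equivalent strictly convex \emph{lattice} norm $\vertiii{\cdot}_{\mathrm{sc}}$. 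Set $X := (d(w,1)_*, \vertiii{\cdot}_{\mathrm{sc}})$. As a lattice norm, $\vertiii{\cdot}_{\mathrm{sc}}$ keeps $(d_n)$ a $1$-unconditional basis, and being equivalent to the original norm it keeps $(d_n)$ shrinking (shrinkingness is an isomorphic invariant). Thus $X$ is strictly convex with a shrinking $1$-unconditional basis and $X \cong d(w,1)_*$ as Banach spaces.

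Finally I would apply Theorem~\ref{thm:gen-uncountable-propA1} to this $X$: the space $Z = (X \oplus_\infty c_0, \vertiii{\cdot})$ is strictly convex by \ref{item:thm-gup-sc}, almost square by \ref{item:thm-gup-asq} and so has the LD2P by \ref{item:thm-gup-ld2p}, and carries a $1$-unconditional basis by \ref{item:thm-gup-1uncond}. Since $X \oplus_\infty c_0 \cong d(w,1)_* \oplus_\infty c_0 \cong d(w,1)_*$, transporting $\vertiii{\cdot}$ through this isomorphism produces an equivalent norm on $d(w,1)_*$ that is simultaneously strictly convex, has a $1$-unconditional basis, and has the LD2P. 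I expect the one step requiring genuine care to be the existence of a \emph{complemented} copy of $c_0$ (equivalently, the $\oplus_\infty c_0$-absorption displayed above); everything afterwards is direct-sum bookkeeping together with the lattice renorming, which supplies precisely the two hypotheses—strict convexity and a shrinking $1$-unconditional basis—needed to run Theorem~\ref{thm:gen-uncountable-propA1}.
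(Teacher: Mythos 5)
Your proof is correct, and while it rests on the same two pillars as the paper's --- Theorem~\ref{thm:gen-uncountable-propA1} together with the fact that the separable, non-reflexive, M-embedded space $d(w,1)_*$ contains a complemented copy of $c_0(\mathbb N)$ --- it assembles them differently. The paper first renorms $d(w,1)_*$ into a $1$-symmetric strictly convex space $Y$ via \cite[Proposition~4]{MR1348481}, writes $Y$ as the sum of a copy of $c_0(\mathbb N)$ and its complement $X$, and applies Theorem~\ref{thm:gen-uncountable-propA1} to that complement, so that $Z = X \oplus_\infty c_0(\mathbb N)$ is already isomorphic to $d(w,1)_*$. You instead apply the theorem to the \emph{whole} renormed space and recover $d(w,1)_*$ afterwards through the absorption $d(w,1)_* \oplus_\infty c_0(\mathbb N) \cong d(w,1)_*$, which you correctly derive from the complemented copy and $c_0 \oplus_\infty c_0 \cong c_0$. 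Your version costs one extra (routine) decomposition step, but it buys something genuine: part~\ref{item:thm-gup-1uncond} of Theorem~\ref{thm:gen-uncountable-propA1} requires the summand to have a shrinking $1$-unconditional basis, which is manifest for your choice (the canonical basis of $d(w,1)_*$, which you rightly check is $1$-symmetric and shrinking), whereas the paper's proof never verifies this hypothesis for the unidentified complement $X$. So your arrangement supports the ``$1$-unconditional basis'' clause of the statement more solidly than the paper's own argument does.

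One step you should tighten is the renorming. You obtain strict convexity ``by separability'' and then pass to a strictly convex lattice norm via the equivalence quoted after Proposition~\ref{prop:sc=>sm}; but clause (i) of that equivalence asks for a \emph{pointwise lower semi-continuous} strictly convex norm, which separability alone does not hand you. The repair is easy: since the basis is shrinking, the finitely supported functionals are norming, so the canonical norm of $d(w,1)_*$ is pointwise lower semi-continuous, and then the Clarkson-type renorming whose square is $\|x\|^2 + \sum_{n} 2^{-n} (e_n^*(x))^2$ is pointwise lower semi-continuous and strictly convex. Cleaner still is to do what the paper does and quote \cite[Proposition~4]{MR1348481}, which produces an equivalent $1$-symmetric (hence lattice) strictly convex norm in one stroke. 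Finally, of your two arguments for the complemented $c_0$-copy, prefer the M-embedded-plus-Sobczyk route (or the paper's citation of \cite[Corollary~4.7~(d)]{MR1238713}); the flat-block argument requires an actual computation with the fundamental function of $d(w,1)_*$ that your sketch does not carry out.
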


\begin{proof}
  Since $d(w, 1)_*$ is separable with a 1-symmetric basis, we get from
  \cite[Proposition~4]{MR1348481} that this space has an equivalent
  1-symmetric strictly convex renorming $Y$. Since $d(w, 1)_*$ is
  non-reflexive and M-embedded, $Y$ contains a complemented copy of
  $c_0(\mathbb N)$ by \cite[Corollary~4.7 (d)]{MR1238713}.  Hence, denoting this
  complement by $X,$ which as a subspace of $Y$ is strictly convex,
  Theorem \ref{thm:gen-uncountable-propA1} applies to conclude
  that $d(w, 1)_*$ admits a strictly convex renorming with a
  1-unconditional basis and the LD2P.
\end{proof}

Let us end the paper with some questions.

\begin{quest}
  Does the predual $d(w, 1)_*$ of the Lorentz space $d(w, 1)$ admit a
  strictly convex renorming with the LD2P for which the canonical
  basis of $d(w, 1)_*$ is 1-symmetric?
\end{quest}

\begin{quest}
  Does there exist a strictly convex renorming of
  $\ell_\infty(\mathbb N)$ with the LD2P?
\end{quest}

In \cite[Proposition~A.1]{MR3499106} it was shown that
$c_0(\mathbb{N})$ with the Nakano norm is not only
strictly convex with the LD2P,
it is even M-embedded and its dual is smooth.
For $\Gamma$ uncountable $\ell_1(\Gamma)$ does not
have an equivalent smooth norm so such a renorming
is not possible for $c_0(\Gamma)$.
However, Corollary~\ref{cor:renorm_c_0_gamma} gives
a strictly convex renorming of $c_0(\Gamma)$
which is almost square and has a 1-unconditional basis
and we do not know if this can be improved to be M-embedded.

\begin{quest}
  Let $\Gamma$ be uncountable.
  Does there exist an M-embedded strictly convex renorming of
  $c_0(\Gamma)$ with a 1-unconditional basis?
\end{quest}

\section*{Acknowledgments}
\label{sec:ack}
Some of this research was carried out when the first author visited
the Faculty of Electrical Engineering at the Czech Technical
University in Prague in the spring of 2024. He would like to thank his
host for the warm hospitality extended during his stay.

The second author was supported by GA23-04776S and
SGS22/053/OHK3/1T/13 of CTU in Prague.

\bibliographystyle{amsalpha}
\newcommand{\etalchar}[1]{$^{#1}$}
\providecommand{\bysame}{\leavevmode\hbox to3em{\hrulefill}\thinspace}
\providecommand{\MR}{\relax\ifhmode\unskip\space\fi MR }
\providecommand{\MRhref}[2]{%
  \href{http://www.ams.org/mathscinet-getitem?mr=#1}{#2}
}
\providecommand{\href}[2]{#2}

\end{document}